\theoremstyle{plain}
\newtheorem{thm}{Theorem}[section]
\newtheorem{prop}[thm]{Proposition}
\theoremstyle{definition}
\newtheorem{defn}[thm]{Definition}
\newtheorem{exam}[thm]{Example}
\newtheorem{rem}[thm]{Remark}
\title{INVARIANTS OF HANDLEBODY-KNOTS VIA YOKOTA'S INVARIANTS}
\author{Atsuhiko Mizusawa}
\author{ Jun Murakami}
\address{Department of Mathematics, Fundamental Science and Engineering, Waseda University, 3-4-1Okubo, Shinjuku-ku, Tokyo 169-8555, Japan}
\email[Jun Murakami]{murakami@waseda.jp}
\email[Atsuhiko Mizusawa]{a\_mizusawa@aoni.waseda.jp}
\keywords{Handlebody-knot, quantum invariant, Yokota's invariants, WRT invariants}
\date{\today}
\begin{document}

\begin{abstract}
 We construct quantum $\mathcal{U}_q(\mathfrak{sl}_{\,2})$ type invariants for handlebody-knots in the 3-sphere $S^3$. A handlebody-knot is an embedding of a handlebody in a 3-manifold. These invariants are linear sums of Yokota's invariants for colored spatial graphs which are defined by using the Kauffman bracket. We give a table of calculations of our invariants for genus 2 handlebody-knots up to six crossings. We also show our invariants are identified with special cases of the Witten-Reshetikhin-Turaev invariants.
\end{abstract}

\subjclass[2010]{57M27, 57M25,  57R65.}

\maketitle

\section{Introduction} \label{sec1}

\par
Throughout this paper we work in the piecewise linear category. A spatial graph is an embedding of a finite graph into a 3-manifold. Suzuki \cite{Su} introduced the notion of the neighborhood equivalence for spatial graphs. Two spatial graphs are neighborhood equivalent if they have the same regular neighborhood up to isotopy of the 3-manifold. Ishii \cite{Ishi} reformulated this notion as a handlebody-knot which is an embedding of a handlebody into a 3-manifold. If the genus of the handlebody is 1, the handlebody-knot is regarded as an ordinary knot. 
\par
A handlebody-knot in the 3-sphere $S^3$ is represented by a diagram of a spatial trivalent graph whose regular neighborhood is isotopic to the handlebody-knot. Ishii \cite{Ishi} showed that two diagrams representing the same handlebody-knot are transformed to  each other by a sequence of six local moves called ``Reidemeister moves'' for handlebody-knots. Five of them are the Reidemeister moves for spatial trivalent graphs. Therefore, we can make invariants of handlebody-knots from invariants of spatial trivalent graphs by modifying them to satisfy the new Reidemeister move.
\par
In this paper we construct quantum $\mathcal{U}_q(\mathfrak{sl}_2)$ type invariants for handlebody-knots in $S^3$ via Yokota's invariants \cite{YO} which are quantum $\mathcal{U}_q(\mathfrak{sl}_2)$ type invariants for spatial graphs in $S^3$. Yokota's invariants are generalized to the relativistic invariants \cite{BGM} for spatial graphs in any closed oriented 3-manifold. The arguments in this paper hold for the relativistic invariants and our invariants are generalized for any closed oriented 3-manifold. In this paper we focus on Yokota's invariants since it is essential.
\par
There are various invariants for handlebody-knots. The Alexander ideals are known as invariants for neighborhood equivalence classes of spatial graphs \cite{Kin} that are handlebody-knots. Invariants using the Fox coloring \cite{Ishi} and quandle cocycle invariants \cite{Ishi, IshiIwa} were also defined for handlebody-knots. However quantum type invariants for handlebody-knots have not been defined yet
\footnote{A construction of quantum invariants for handlebody-knots via Yetter-Drinfeld module was announced by \cite{IshiMas}.} 
.
\par
We also show that our invariants are identified with special cases of the Witten-Reshetikhin-Turaev (WRT) invariants \cite{RT} for closed oriented 3-manifolds. 
\par
In this paper, we first introduce a handlebody-knot and its Reidemeister moves in Sec. 2. We define quantum $\mathcal{U}_q(\mathfrak{sl}_2)$ type invariants for the handlebody-knots via Yokota's invariants in Sec. 3. In Sec. 4, a table of calculations of our invariants is shown, which tells us that our invariants are non-trivial. We also show some properties of the invariants. In Sec. 5, it is shown that our invariants are identified with special cases of the WRT invariants.
 
\section{Handlebody-Knot and Handlebody-Link} \label{sec2}
\par
 A \textit{handlebody-knot} is an embedding of a handlebody into a 3-manifold and  a \textit{handlebody-link} is a disjoint union of embeddings of handlebodies into a 3-manifold (Fig.~\ref{fig1}). Throughout this paper, the notion of a handlebody-link includes a handlebody-knot. Two handlebody-links are \textit{equivalent} if there is an isotopy of the 3-manifold which transforms one handlebody-link to another. 
\begin{figure} [h] 
  $$ \raisebox{-13 pt}{\includegraphics[bb=  0 0 100 38, width=95 pt]{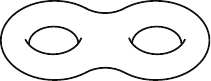}} \,\,\, \mbox{ \Large{$\hookrightarrow$}} \,\,\, \raisebox{30 pt}{\begin{overpic}[bb=0 0 54 54, height=20 pt]{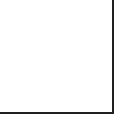}\put(5,30){\mbox{$M^3$}}\end{overpic}} \,\, \raisebox{-18 pt}{\includegraphics[bb= 0 0 102 87, height=65 pt]{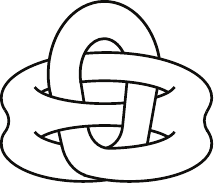}}$$
  \caption{An embedding of a genus 2 handlebody into a 3-manifold $M^3$.} \label{fig1}
\end{figure}
\par
We have a map from spatial graphs to handlebody-links that takes the regular neighborhoods of the spatial graphs (Fig.~\ref{fig2}). A handlebody-link in $S^3$ is called \textit{trivial} if it is equivalent to handlebodies standardly embedded in $S^3$. For handlebody-links in $S^3$, through the above map, we can handle them by diagrams of spatial graphs. However, there are infinitely many diagrams that represent the same handlebody-link (Fig.~\ref{fig3}). In \cite{Ishi}, Ishii showed that diagrams of spatial trivalent graphs have a one to one correspondence to handlebody-links up to local moves called Reidemeister moves (for handlebody-links). This means 
$$\{\mbox{handlebody-links}\} \!=\! \{\mbox{diagrams\! of spatial\! trivalent\! graphs}\}/\{ \rm{Reidemeister \,\, moves}\}.$$
Here spatial trivalent graphs may have circle components that correspond to genus 1 components of handlebody-links. These circles do not have any vertices or edges. 
The Reidemeister moves for handlebody-links are shown in Fig.~\ref{fig4}. The moves RI-RV are the Reidemeister moves for spatial trivalent graphs. 
\begin{figure}[ht]
$$  \raisebox{-17 pt}{\includegraphics[bb= 0 0 87 63, height=60 pt]{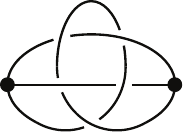}} \,\,\,\, \mbox{\Large{$\rightarrow$}} \,\,\,\, \raisebox{-18 pt}{\includegraphics[bb= 0 0 102 87, height=63 pt]{handlebody-knot01.pdf}}$$
\caption{A spatial graph represents a handlebody-link.} \label{fig2}
\end{figure}
\begin{figure}[ht]
$$  \raisebox{-11 pt}{\includegraphics[bb= 0 0 104 41, width=75 pt]{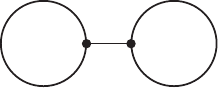}} \,\, \mbox{\Large{$\rightarrow$}} \,\,  \raisebox{-13 pt}{\includegraphics[bb=  0 0 100 38, width=90 pt]{handlebody01.pdf}}
\,\, \mbox{\Large{$\approx$}} \,\, \raisebox{-17 pt}{\includegraphics[bb= 0 0 108 71, width=65 pt]{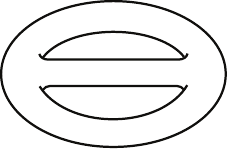}} \,\, \mbox{\Large{$\leftarrow$}} \,\, \raisebox{-13 pt}{\includegraphics[bb= 0 0 113 71, width=55 pt]{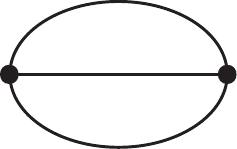}}$$
\caption{Two different spatial graphs that represent the trivial genus 2 handlebody-knot.} \label{fig3}
\end{figure}
\begin{figure}[ht]
\begin{centering}
 \includegraphics[bb= 0 0 93 53, height=55 pt]{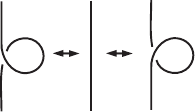}\hspace{1.0cm} \includegraphics[bb= 0 0 80 65, height=55 pt]{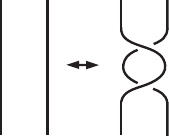} \hspace{1.0cm} \includegraphics[bb= 0 0 104 41, width=100 pt, height=55 pt]{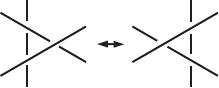}\\
 RI \hspace{3.35cm} RII \hspace{3.35cm} RIII\\
 \vspace{0.1cm}
  \includegraphics[bb= 0 0 109 32, height=50 pt]{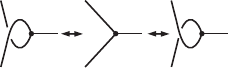} \hspace{0.5cm} \includegraphics[bb= 0 0 109 40, height=50 pt]{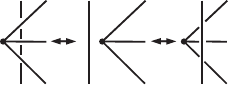}\\
  \hspace{0.8cm}RIV \hspace{5.2cm} RV \\
 \vspace{0.1cm}
\includegraphics[bb= 0 0 93 35, height=50 pt]{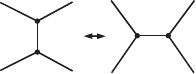} \\
   RVI\\
\end{centering}
\caption{Reidemeister moves for handlebody-links.} \label{fig4}
\end{figure}

\section{Quantum $\mathcal{U}_q(\mathfrak{sl}_2)$ Type Invariants for Handlebody-Links} \label{sec3}
\par
 In this section we define quantum $\mathcal{U}_q(\mathfrak{sl}_2)$ type invariants for handlebody-links in the 3-sphere $S^3$.

\subsection{Skein space and Jones-Wenzl idempotent}
\par
 To define quantum $\mathcal{U}_q(\mathfrak{sl}_2)$ type invariants for handlebody-links, we introduce the skein space and its special elements the Jones-Wenzl idempotents.

\begin{defn}[Skein space]
 Let $F$ be a connected and oriented 2-manifold (possibly with boundaries). A \textit{link diagram} in $F$ consists of finitely many closed curves and arcs whose endpoints are at the boundaries.  These curves may have finitely many transverse crossings that have information which of the two arcs is upper or lower. Two link diagrams are regarded as the same if there is an isotopy of $F$ that changes one link diagram to another fixing $\partial F$.
  \par
Let $A$ be a fixed value in $\mathbb{C}\setminus \{0\}$. A \textit{skein space} $\mathcal{S}(F)$ of $F$ is the vector space of formal linear sums over $\mathbb{C}$ of link diagrams in $F$ subject to the following relations, 
\begin{eqnarray*}
 & &\raisebox{-17 pt}{\includegraphics[bb= 0 0 73 73, width=40 pt, height=40 pt]{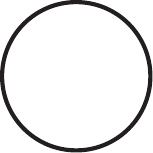}}  \raisebox{-2 pt}{\mbox{\Large{ $\sqcup$ $D$}}} 
 \, =-(A^2+A^{-2})  \raisebox{-2 pt}{\mbox{\Large{ $D$}}} \, ,\\
 & & \raisebox{-17 pt}{\includegraphics[bb= 0 0 72 71, width=40 pt, height=40 pt]{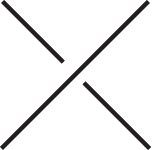}}
 \, =A \, \raisebox{-17 pt}{\includegraphics[bb= 0 0 72 71, width=40 pt, height=40 pt]{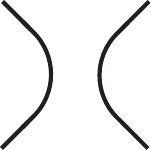}} 
  + A^{-1} \, \raisebox{-17 pt}{\includegraphics[bb= 0 0 71 72, width=40 pt, height=40 pt]{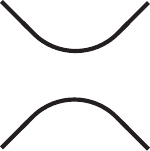}} \,\,,
 \end{eqnarray*}
where the left-hand side of the first relation is a disjoint union of a trivial circle and a link diagram $D$ and in the second relation the figures represent parts of link diagrams and the complements of them are the same diagrams.
\end{defn}
 \par
 If $F=S^2$, since $\partial F = \emptyset$, all curves in $\mathcal{S}(F)$ are closed. By the relations, the closed curves become scalar multiplied blank diagrams. Thus $\mathcal{S}(S^2)$ is identified with $\mathbb{C}$. For each link diagram in $S^2$, we represent the corresponding complex value as the diagram inside the \textit{Kauffman bracket} $\left< \,\cdot\, \right>$. 

   Note that the value of the Kauffman bracket is an invariant of framed links (i.e. it does not change under the RII and RIII moves).
 \par
 We fix $3\leq r \in \mathbb{N}$ and $A = {\rm e}^{2\pi i/4r}$ for later use. Let $D^2_{n}$ be a 2-disc that has $2n$ points in the boundary. We regard it as a rectangle whose left edge and right edge have just $n$ points respectively.

\begin{defn}[Jones-Wenzl idempotent]
 Following \cite{Lic}, for $0\leq n \leq r-1$, we define a special element $JW_n \in \mathcal{S}(D^2_{n})$ illustrated as a white box in diagrams. $JW_n$ is defined recursively as follows, 
$$
 \raisebox{-17 pt}{\includegraphics[bb= 0 0 402 250, height=40 pt]{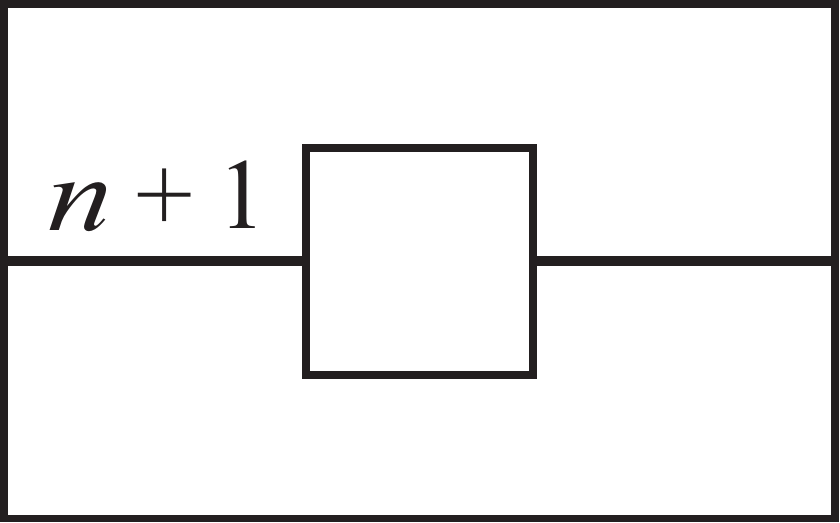}}
 =\raisebox{-17 pt}{\includegraphics[bb= 0 0 362 250, height=40 pt]{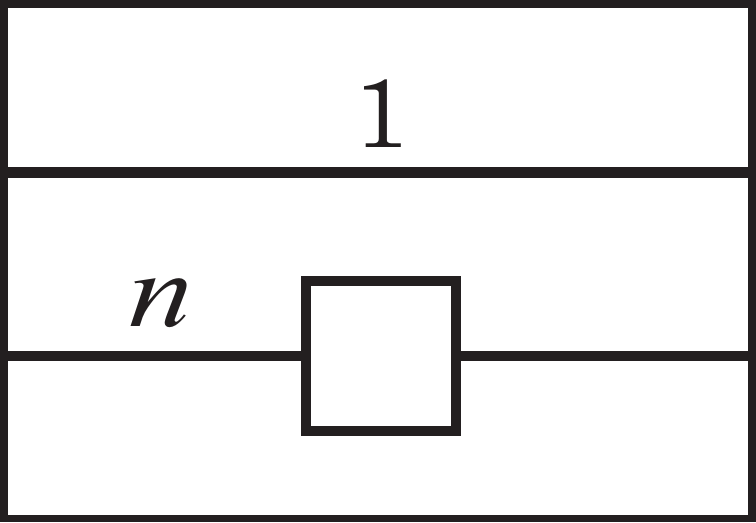}} 
  - \frac{\Delta_{n-1}}{\Delta_{n}}  \raisebox{-17 pt}{\includegraphics[bb= 0 0 415 250, height=40 pt]{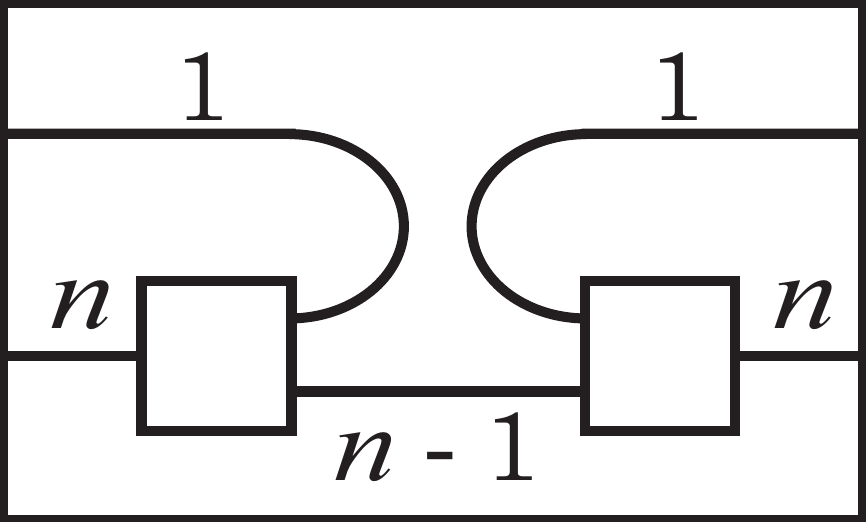}} \,\,,
$$
 where $JW_0$ is a blank diagram and
$$
 \Delta_{n}
 =\left< \raisebox{-19 pt}{\includegraphics[bb= 0 0 75 86, height=40 pt]{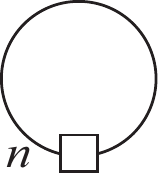}} \right>
 = (-1)^n\frac{A^{2(n+1)}-A^{-2(n+1)}}{A^2 - A^{-2}} = (-1)^n\frac{\sin\frac{\pi}{r}(n+1)}{\sin\frac{\pi}{r}}.$$
In these relations, the strands to which numbers are attached mean those numbers of parallel arcs. Note that $\Delta_{r-1}=0$.
\end{defn}
 \par
 The element $JW_n$ is called \textit{Jones-Wenzl idempotent} because of the following relation
\begin{eqnarray*}
 & &\raisebox{-17 pt}{\includegraphics[bb= 0 0 402 250, width=60 pt, height=40 pt]{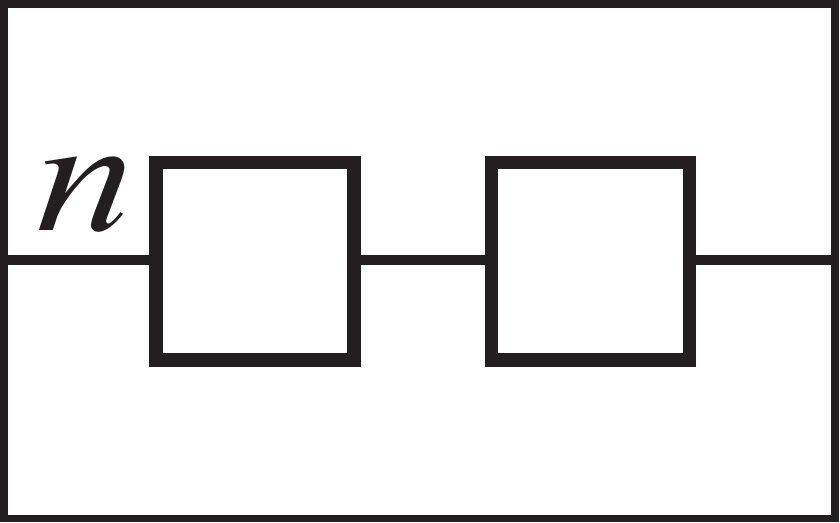}}
 =\raisebox{-17 pt}{\includegraphics[bb= 0 0 402 250, width=60 pt, height=40 pt]{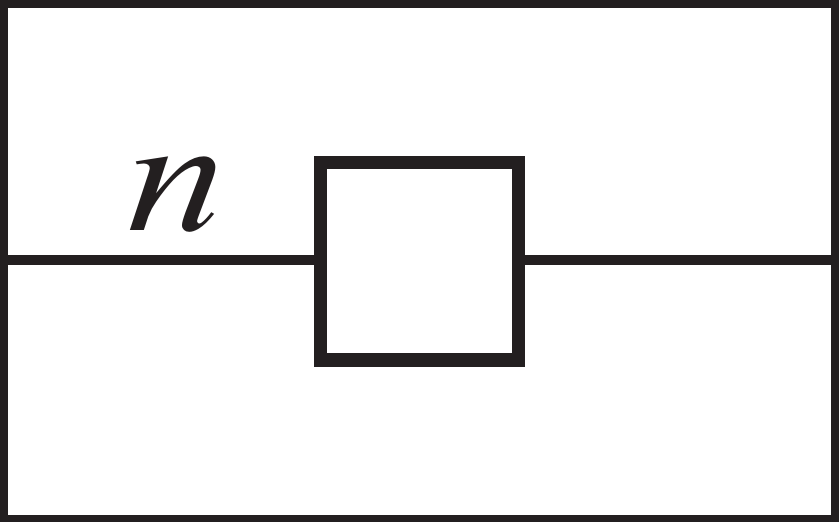}} \,\,.
 \end{eqnarray*}
     
\subsection{Definition of quantum $\mathcal{U}_q(\mathfrak{sl}_2)$ type invariants}
\par
Let $I$ be a set of integers $\{0, 1, \dots , r-2\}$. We call an element of $I$ a \textit{color}. A \textit{coloring} of a spatial graph $\Gamma$ is a map $E(\Gamma) \rightarrow I$, where $E(\Gamma)$ is a set of edges and circles of $\Gamma$. We represent a coloring by attaching colors to edges and circles. In \cite{YO}, Yokota defined invariants for colored spatial graphs by using the Kauffman bracket. We define quantum $\mathcal{U}_q(\mathfrak{sl}_2)$ type invariants for handlebody-links via Yokota's invariants.
 \par
From now on, we regard an $i$ colored edge or circle in a spatial trivalent graph diagram as $i$ parallel arcs in which the idempotent $JW_i$ is inserted. We also regard a trivalent vertex joined by three colored edges as a diagram of arcs (see Fig.~\ref{fig5}), where $x=(j+k-i)/2, y=(i+k-j)/2$ and $z=(i+j-k)/2$.
\begin{figure}[ht]
 $$  \raisebox{-19pt}{\includegraphics[bb= 0 0 80 69, height=50 pt]{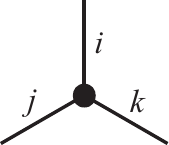}} \longrightarrow  \raisebox{-19pt}{\includegraphics[bb= 0 0 74 64, height=50 pt]{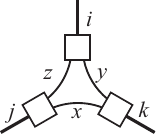}} $$
 \caption{A colored trivalent vertex and a corresponding diagram.} \label{fig5}
 \end{figure}
If $i$, $j$ and $k$ satisfy $i\leq j+k$, $j \leq i+k$, $k \leq i+j$ and $i+j+k \in 2\mathbb{Z}$, then we have the corresponding diagram in the right-hand side in Fig.~\ref{fig5}. Otherwise, we cannot realize the diagram. Moreover, if $i+j+k \leq 2(r-2)$, then the following value is non-zero.
$$\theta(i,j,k) = \left< \raisebox{-26 pt}{\includegraphics[bb= 0 0 109 87, width=60 pt]{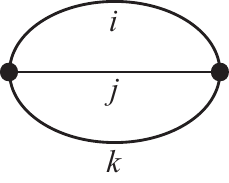}} \right> =  \left< \raisebox{-21 pt}{\includegraphics[bb= 0 0 88 71, width=60 pt]{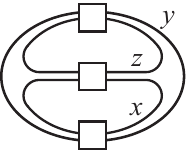}} \right> =  \frac{\Delta_{x+y+z}!\Delta_{x-1}!\Delta_{y-1}!\Delta_{z-1}!}{\Delta_{y+z-1}!\Delta_{x+z-1}!\Delta_{x+y-1}!} \,\,,$$
where $\Delta_n ! = \Delta_n \Delta_{n-1} \cdots \Delta_{0}$ and $\Delta_{-1}! = 1$. The value $\theta(i,j,k)$ is independent of the order of the triple $(i, j, k)$. We summarize these conditions.

\begin{defn}[Admissible condition]
  A triple $(i, j, k) \in I^3$ is called \textit{admissible} if $(i, j, k)$ satisfies the following conditions: 
 \begin{eqnarray*}
& &i\leq j+k,\,\,\,\,\, j \leq i+k,\,\,\,\,\, k \leq i+j,  \\
& &\hspace{-0.5cm} i+j+k \in 2\mathbb{Z}, \,\,\,\,\, i+j+k \leq 2(r-2).
 \end{eqnarray*}
A coloring for a spatial trivalent graph is called \textit{admissible} if the three colors at each vertex are admissible.
\end{defn}
 \par
Yokota's invariants for colored spatial graphs in $S^3$ are defined as follows: first they are defined for trivalent graphs and then they are generalized for arbitrary graphs.

\begin{defn}[Yokota's invariants \cite{YO}, see also \cite{BGM}]
 Let $\Gamma$ be a spatial trivalent graph in $S^3$. We fix a coloring to $\Gamma$ by attaching numbers $i_1, i_2, ... , i_n \in I$ to the edges and circles of $\Gamma$. If the coloring is admissible, for each vertex the triple $(i_a, i_b, i_c)$ of the edge colors is admissible. Let $D(i_1, i_2, ... , i_n)$ be a diagram of $\Gamma(i_1, i_2, ... , i_n)$ and $\overline{D}(i_1, i_2, ... , i_n)$ be the mirror image of $D(i_1, i_2, ... , i_n)$.  Then Yokota's invariants $\left<\,\cdot\,\right>_{Y}$ for the admissibly colored spatial trivalent graph $\Gamma(i_1, i_2, ... , i_n)$ are defined as follows: 
$$\left< \Gamma(i_1, i_2, ... , i_n) \right>_{Y} := \left< D(i_1, i_2, ... , i_n) \right> \left< \overline{D}(i_1, i_2, ... , i_n) \right> \mbox{\Large/} \hspace{-0.5cm} \prod_{\substack{\mbox{\footnotesize triples of colors}\\\mbox{\footnotesize at vertices}}} \hspace{-0.5cm} \theta(i_a, i_b, i_c).$$
If the coloring is not admissible, we define $\left< \Gamma(i_1, i_2, ... , i_n) \right>_{Y} :=0$.
 \par
Yokota's invariants are generalized to arbitrary graphs by the next relations at vertices. For an $n$-valent vertex ($n>3$),
\begin{equation} \label{eq1}
  \left< \, \raisebox{-19 pt}{\includegraphics[bb=0 0 500 200,width=70 pt]{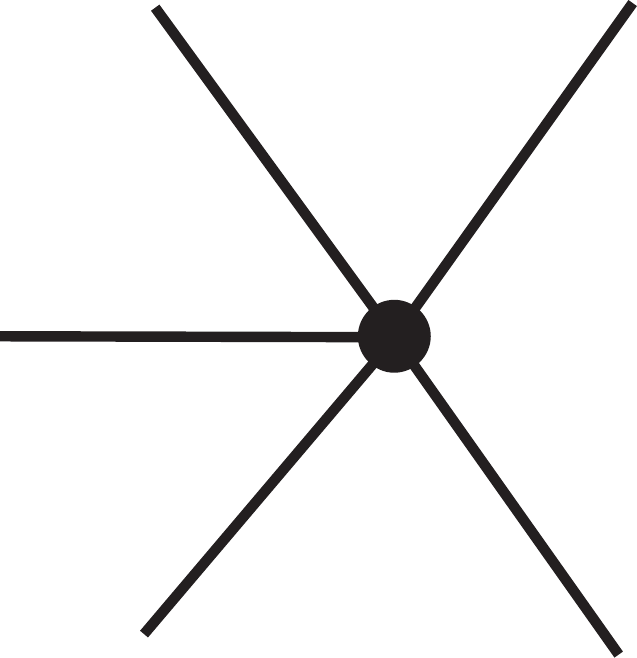}} \hspace{-0.8cm} \right>_{\mbox{$Y$}}
  =
  \sum_i \Delta_i 
  \left<\raisebox{-18 pt}{\includegraphics[bb= 0 0 133 83, width=70 pt]{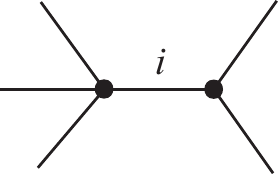}} \,\,\, \right>_{\mbox{$Y$}},
\end{equation}
 where the color $i$ runs over all admissible colors for the right-hand side diagram. For a 2-valent vertex, 
 \par
\[
  \left<  \raisebox{0 pt}{\includegraphics[bb= 0 0 118 29, width=70 pt]{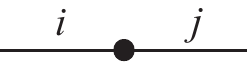}} \right>_{\mbox{\normalsize $Y$}}
  =
  \frac{\delta_{ij}}{\Delta_i}
  \left<\raisebox{2 pt}{\includegraphics[bb= 0 0 97 21, width=63 pt]{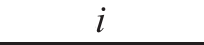}} \right>_{\mbox{$Y$}},
\]
and for a 1-valent vertex,
\[
  \left< \raisebox{-22.5 pt}{\includegraphics[bb= 0 0 108 91, width=60 pt]{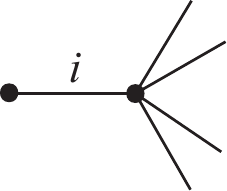}} \right>_{\mbox{$Y$}}
  =
  \delta_{i0} 
  \left<\raisebox{-22 pt}{\includegraphics[bb=0 0 500 200,width=70 pt]{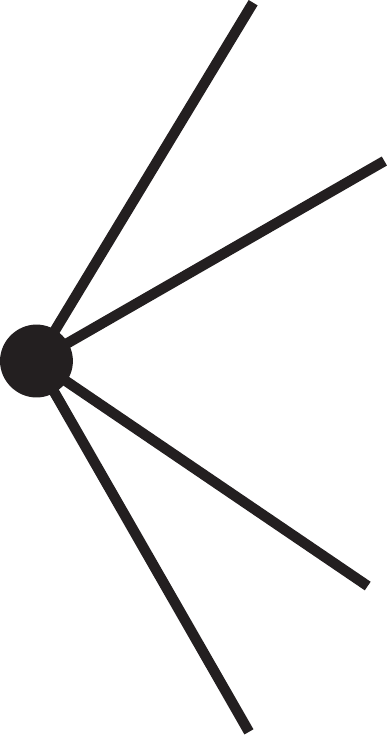}}  \hspace{-1.5cm} \right>_{\mbox{$Y$}}.
\]
\end{defn}

\begin{thm}[\cite{YO}]  \label{Thm1}
$\left< \,\cdot \,\right>_Y$ is invariant under the RI-RV moves of diagrams and the ways to extend an edge in Eq.~(\ref{eq1}). Therefore, $\left< \,\cdot \,\right>_Y$ is well-defined as an invariant of spatial graphs.
\end{thm}

\begin{rem}
 Originally, Yokota's invariants are defined with a general variable $A\in\mathbb{C}\setminus \{0\}$ and a set of colors $\mathbb{N} \cup \{0\}$. In this paper we fix $A={\rm e}^{2\pi i/4r}$ $(3\leq r \in \mathbb{N})$ and restrict the set of colors to $I$. Since $A^{-1}=\overline{A}$, $\displaystyle{\left< \overline{D} \right> = \overline{\left< D \right>}}$ and the definition of Yokota's invariants is rewritten as
$$\left< \Gamma(i_1, i_2, ... , i_n) \right>_{Y} = \left| \left< D(i_1, i_2, ... , i_n) \right> \right|^2 \hspace{0.2cm} \raisebox{-2pt}{\mbox{\Large/}} \hspace{-0.5cm} \prod_{\substack{\mbox{\footnotesize triples of colors}\\\mbox{\footnotesize at vertices}}} \hspace{-0.5cm} \theta(i_a, i_b, i_c).$$
\end{rem}
 \par
Now we define quantum $\mathcal{U}_q(\mathfrak{sl}_2)$ type invariants for handlebody-links in $S^3$ by taking linear sums of Yokota's invariants for all possible colorings with weights. 
\begin{defn}[Quantum $\mathcal{U}_q(\mathfrak{sl}_2)$ type invariants for handlebody-links]
 Let $J$ be a handlebody-link and $\Gamma$ be a spatial trivalent graph that represents $J$. We attach colors $i_k$ to edges and $j_l$ to circles. Then we define a value $\left< J\right>_H$ as follows: 
\begin{eqnarray*}
& &\hspace{-0.5cm}\left< J \right>_H:=\sum_{\substack{i_1, ... , i_n\\j_1, ... , j_m}} \Delta_{i_1} \cdots \Delta_{i_n} \left< \Gamma(i_1, ... ,i_n, j_1, ... ,j_m) \right>_Y\\
& &\hspace{0.5cm} = \sum_{\substack{i_1, ... , i_n\\j_1, ... , j_m}} \Delta_{i_1} \cdots \Delta_{i_n} \left| \left< D(i_1, ... , i_n, j_1, ... ,j_m) \right> \right|^2 \mbox{\Large/} \hspace{-0.6cm} \prod_{\substack{\mbox{\footnotesize triples of colors}\\\mbox{\footnotesize at vertices}}}  \hspace{-0.6cm} \theta(i_a, i_b, i_c) ,
\end{eqnarray*}
where $D(i_1, ... , i_n, j_1, ... ,j_m)$ is a diagram of $\Gamma(i_1, ... , i_n, j_1, ... ,j_m)$ and each $i_k$ or $j_l$ runs over all admissible colorings for $\Gamma$. Usually we put the diagram of the representing graph inside the bracket instead of the handlebody-link itself as $\left< D \right>_H := \left< J \right>_H$.
\end{defn}
\begin{thm} \label{Thm2}
 Let $D$ be a diagram of a spatial trivalent graph that represents a handlebody-link $J$. Then the value $\left< J \right>_H = \left< D \right>_H$ does not change under the Reidemeister moves RI-RVI. Hence $\left<\, \cdot \, \right>_H$ is an invariant for handlebody-links.
\end{thm}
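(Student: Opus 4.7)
The strategy is to split the verification into two parts, isolating the new handlebody move RVI from the five spatial-graph moves RI--RV. Observe that
\[
\langle J \rangle_H \;=\; \sum_{\substack{i_1,\ldots,i_n \\ j_1,\ldots,j_m}} \Delta_{i_1}\cdots\Delta_{i_n}\, \langle \Gamma(i_1,\ldots,i_n,j_1,\ldots,j_m)\rangle_Y
\]
is a linear combination of Yokota's invariants whose coefficients depend only on the colors, not on the diagram. Theorem~\ref{Thm1} applied term-by-term therefore gives invariance of $\langle \cdot \rangle_H$ under RI--RV, and all the new content of the theorem is concentrated in RVI.

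For RVI, the plan is to reduce the move to the skein-theoretic \emph{handle-slide identity} for the Kirby color
\[
\omega \;=\; \sum_{i=0}^{r-2} \Delta_i \, e_i,
\]
where $e_i$ denotes the Jones--Wenzl-decorated $i$-cable of a strand. (This identity is the tool Lickorish uses to build WRT invariants out of the Kauffman bracket.) I would first fix local diagrams $D$ and $D'$ that differ by a single RVI move and let $e$ be the distinguished edge whose position is changed by the move. Next, using the vertex expansion of Figure~\ref{fig5}, I would replace every trivalent vertex by its Jones--Wenzl resolution, so that $e$ appears as a bundle of $i_e$ parallel strands crossing the adjacent edge either over (in $D$) or under (in $D'$). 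Summing over $i_e$ with weight $\Delta_{i_e}$ then converts the bundle into a Kirby-colored strand, and the handle-slide identity immediately gives equality of the two values. The remaining color sums and the $\theta(i_a,i_b,i_c)$ denominators are inert during this step because the admissibility conditions at the two endpoints of $e$ decouple from the slide.

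The main obstacle will be the second step: one must verify that after the Jones--Wenzl expansion, the sum on the color of $e$ really ranges freely over $\{0,1,\ldots,r-2\}$ with the weights $\Delta_{i_e}$ intact, so that the Kirby color is genuinely produced. This amounts to a bookkeeping check of the admissibility constraints at the endpoints of $e$ against the $\theta$-denominators in the definition of $\langle\cdot\rangle_Y$. Once that is in hand, the remainder of the argument is a direct application of the standard handle-slide lemma and requires no new skein computation.
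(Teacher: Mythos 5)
Your treatment of RI--RV is exactly right and matches the paper: $\langle\cdot\rangle_H$ is a color-sum of Yokota invariants with diagram-independent weights $\Delta_{i_1}\cdots\Delta_{i_n}$, so Theorem \ref{Thm1} applied term by term gives invariance under the five spatial-graph moves.

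The RVI part, however, has a genuine gap, and on two levels. First, the move itself: RVI as treated in the paper is the planar IH-type move exchanging the two ways of resolving a four-valent vertex (an ``H'' versus an ``I''); it involves no over/under crossing between the distinguished edge and an adjacent edge, so the crossing-change/handle-slide mechanism you describe does not even engage the local move being proved. Second, even in situations where a slide is relevant, the Kirby color is not produced by the color sum inside $\langle\cdot\rangle_H$. The handle-slide relation (\ref{eq13}) applies to a \emph{closed} circle carrying $\Omega$ inside a single Kauffman bracket, whereas the color $i_e$ of an edge of $\Gamma$ (i) labels an open edge terminating on Jones--Wenzl projectors at two trivalent vertices, (ii) enters the denominators $\theta(i_a,i_b,i_e)$ at both endpoints, which depend on $i_e$, and (iii) appears simultaneously in both factors $\langle D\rangle$ and $\langle\overline{D}\rangle$ of Yokota's invariant. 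Hence the effective weight on the $i_e$-cable is not $\Delta_{i_e}$, the sum over $i_e$ is not the Kirby color on a strand, and the claim that the $\theta$-denominators are ``inert'' is precisely the point that fails; this is not a routine bookkeeping check. Turning $\langle\cdot\rangle_H$ into a bracket of genuine $\Omega$-colored closed circles requires fusing $D$ with its mirror image and cancelling the $\theta$'s via relations (\ref{eq7}), (\ref{eq11}), (\ref{eq12}) together with inserting $\Omega$-circles at cost $N^{-1}$ --- that is the machinery of the proof of Theorem \ref{Thm3}, which moreover already presupposes the well-definedness asserted in Theorem \ref{Thm2} when choosing a convenient representative graph. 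The intended argument is far shorter and purely at the Yokota level: by relation (\ref{eq1}), each side of the RVI move, summed over the color of the middle edge with weight $\Delta_i$, equals the Yokota invariant of the diagram with the contracted four-valent vertex, and the independence of this expansion from the direction of splitting is part of Theorem \ref{Thm1}; multiplying by the remaining $\Delta$-weights and summing over all colors then gives $\langle D\rangle_H=\langle D'\rangle_H$ directly.
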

\begin{proof} The invariance for RI-RV is derived from the invariance of Yokota's invariants. We show the invariance for RVI. By Eq.~(\ref{eq1}), 
$$
\sum_{i} \Delta_i \left< \, \raisebox{-17 pt}{\includegraphics[bb= 0 0 91 88, height=40 pt]{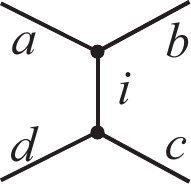}} \, \right>_{\mbox{$Y$}}
=
\left< \, \raisebox{-17 pt}{\includegraphics[bb= 0 0 105 92, height=40 pt]{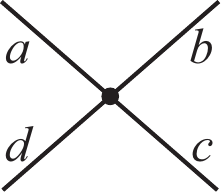}} \, \right>_{\mbox{$Y$}}
=
\sum_j \Delta_j \left< \, \raisebox{-17 pt}{\includegraphics[bb= 0 0 106 100, height=42 pt]{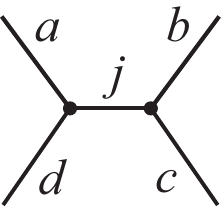}} \, \right>_{\mbox{$Y$}}.$$
Thus we have
\begin{eqnarray*}
& &\hspace{-0.6cm}\left< \, \raisebox{-17 pt}{\includegraphics[bb= 0 0 91 88, height=40 pt]{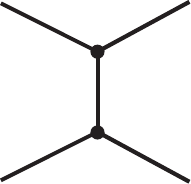}} \, \right>_{\mbox{$H$}}
=\sum_{a,b,c,d,i}  \Delta_a \Delta_b \Delta_c \Delta_d \Delta_i \left< \, \raisebox{-17 pt}{\includegraphics[bb= 0 0 91 88, height=40 pt]{IHmove02-2.pdf}} \, \right>_{\mbox{$Y$}}\\
& &\hspace{2.0cm}=\sum_{a,b,c,d,j}  \Delta_a \Delta_b \Delta_c \Delta_d \Delta_j \left< \, \raisebox{-17 pt}{\includegraphics[bb= 0 0 106 100, height=42 pt]{IHmove04.pdf}} \, \right>_{\mbox{$Y$}}
=\left< \, \raisebox{-17 pt}{\includegraphics[bb= 0 0 106 91, height=40 pt]{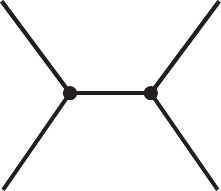}} \, \right>_{\mbox{$H$}}.
\end{eqnarray*}
\end{proof}

\section{Examples of the Invariants $\left< \, \cdot \, \right>_H$} \label{sec4}
 \par
In this section, we show some calculations of our invariants and a table of values of them for \textit{irreducible} genus 2 handlebody-knots up to six crossings classified in \cite{IKMS}. A handlebody-knot is called \textit{reducible} if it is represented by a spatial graph that has a cut edge. We also show some properties of our invariants.
 \par
First, we mention that $\left< \, \cdot \, \right>_H$ does not distinguish a handlebody-link from its mirror image. This is clear from the definition of $\left< \, \cdot \, \right>_H$.
\subsection{Relations}
 \par
We recall some formulas of the Kauffman bracket for diagrams including $JW_n$. For details, see \cite{KL, Lic}.
 \par
\noindent\textbf{Tetrahedron's edge.} (We rearrange the order of the variables of the formula in \cite{KL}.)
$$\left< \raisebox{-18pt}{\includegraphics[bb= 0 0 103 90, width=50 pt]{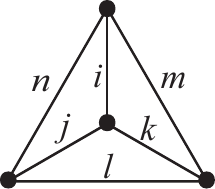}} \right> =: \left[ \begin{array}{ccc}  i & j & k \\ l & m & n \\ \end{array} \right] =  \frac{\mathcal{F}!}{\mathcal{E}!}\sum_{c \leq z \leq C}\frac{(-1)^z [z+1]!}{\prod_s [z-a_s]! \prod_t [b_t-z]!} ,$$
where 
\allowdisplaybreaks{\begin{eqnarray*}
 & & \hspace{-0.6cm} [n] = \frac{A^{2n} - A^{-2n}}{A^2 - A^{-2}} \left( = (-1)^{n-1} \Delta_{n-1} \right), \hspace{0.3cm} [n]! = [n][n-1] \cdots [1], \,\,\,\,\, [0]!=1,\\
 & & \hspace{1.2cm} \mathcal{F}! = \prod_{s, t} [b_t - a_s ]!, \hspace{0.7cm} \mathcal{E}! = [ i ]! [ j ]! [ k ]! [ l ]! [ m ]! [ n ]!,\\
 & & \hspace{1.3cm} a_1 = \frac{1}{2}(i + j + k), \,\,\,\,\,\,\,\,\, b_1 = \frac{1}{2}(i + j + l + m),\\
 & & \hspace{1.3cm} a_2 = \frac{1}{2}(i + m + n), \,\,\,\,\,\,\, b_2 = \frac{1}{2}(i + k + l + n),\\
 & & \hspace{1.3cm} a_3 = \frac{1}{2}(j + l + n), \,\,\,\,\,\,\,\,\,\, b_3 = \frac{1}{2}(j + k + m + n),\\
 & & \hspace{1.3cm} a_4 = \frac{1}{2}(k + l + m), \,\,\,\,\,\,\,\,\,\, c = \max\{a_s\}, \,\,\,\,\,\, C = \min\{b_t\}.\\
\end{eqnarray*}}
 \par
\noindent\textbf{Local change relations.}
 \begin{equation*} 
   \hspace{0.0cm}\left<\raisebox{-14 pt}{\includegraphics[bb= 0 0 114 53, width=60 pt]{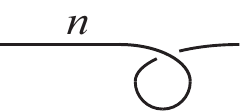}} \,\, \right>
  =
  (-1)^n A^{n^2+2n}
  \Biggr<\raisebox{2 pt}{\includegraphics[bb= 0 0 95 21, width=60 pt]{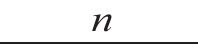}}  \Biggr>.
\end{equation*}
\vspace{0.5cm}
\begin{equation*} \label{eq3}
   \Biggr<\hspace{-0.1cm}\raisebox{-13 pt}{\includegraphics[bb= 0 0 103 58, width=60 pt]{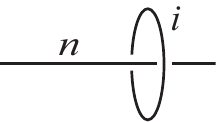}}\hspace{-0.1cm}\Biggr>
  =
  (-1)^i \frac{A^{2(n+1)(i+1)}-A^{-2(n+1)(i+1)}}{A^{2(n+1)}-A^{-2(n+1)}}
  \Biggr<\hspace{-0.1cm} \raisebox{3 pt}{\includegraphics[bb= 0 0 95 21, width=55 pt]{Equation13.pdf}}\hspace{-0.1cm} \Biggr>.
\end{equation*}

\vspace{0.5cm}
\begin{equation} \label{eq4}
   \left< \,\, \raisebox{-23 pt}{\includegraphics[bb= 0 0 101 110, height=50 pt]{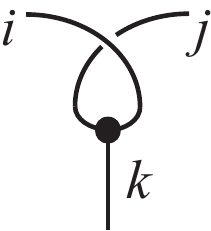}} \,\, \right>
  =
  \lambda^{ij}_k
  \left< \,\, \raisebox{-23 pt}{\includegraphics[bb= 0 0 62 70, height=50 pt]{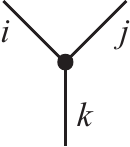}} \,\, \right>,
\end{equation}
where $\lambda^{ij}_k = (-1)^{(i + j - k)/2}A^{i + j - k + (i^2 + j^2 - k^2)/2}$.

\vspace{-0.5cm}
\begin{equation} \label{eq5}
   \left<\raisebox{-20 pt}{\includegraphics[bb= 0 0 119 77, width=70 pt]{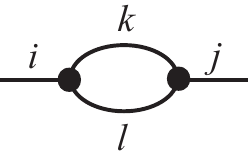}} \right>
  =
  \frac{\theta(i, k ,l)}{\Delta_i} \delta_{ij}
  \Biggr<\raisebox{3 pt}{\includegraphics[bb= 0 0 97 21, width=70 pt]{Equation12.pdf}} \Biggr>.
\end{equation}
\vspace{0.5cm}
\begin{equation}  \label{eq6}
   \left<\raisebox{-26 pt}{\includegraphics[bb= 0 0 148 102, width=80 pt]{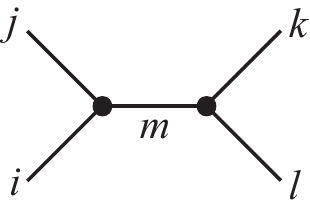}} \right>
  =
  \sum_n
  \left\{ \begin{array}{ccc}  i & j & m \\ k & l & n \\ \end{array} \right\}
  \left< \, \raisebox{-37 pt}{\includegraphics[bb= 0 0 90 136, width=53 pt]{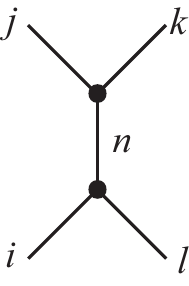}} \, \right>\,,
\end{equation}
where $\displaystyle{  \left\{ \begin{array}{ccc}  i & j & m \\ k & l & n \\ \end{array} \right\} = \frac{ \left[ \begin{array}{ccc}  i & j & m \\ k & l & n \\ \end{array} \right] \Delta_n}{\theta(i, l, n) \theta(j, k, n)}}.$
\vspace{0.5cm}
\begin{equation}  \label{eq7}
  \left< \,\, \raisebox{-21 pt}{\includegraphics[bb= 0 0 114 107, width=55 pt]{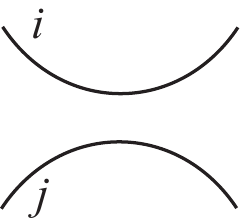}} \,\, \right>
  =
  \sum_k \frac{\Delta_k}{\theta(i, j, k)}
  \left<\raisebox{-26 pt}{\includegraphics[bb= 0 0 136 97, width=80 pt]{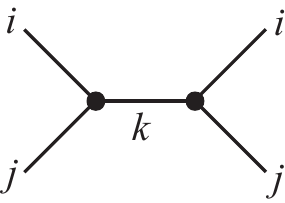}} \right>.
\end{equation}
In Eqs.~ (\ref{eq6}) and (\ref{eq7}), the colors $n$ and $k$ run over all admissible colors of the right-hand side diagrams.
 \par
\noindent\textbf{Omega element.}
Consider the skein space $\mathcal{S}(T)$ of the annulus $T$. Let $\alpha_n$ be an element of $\mathcal{S}(T)$ which is a collection of $n$ parallel closed curves along the band of $T$ in which $JW_n$ is inserted. An element $\Omega$ of $\mathcal{S}(T)$ is a linear sum of $\alpha_n$ with coefficients $\Delta_n$ for $0\leq n \leq r-2$. We will insert $\Omega$ in a component of diagrams along its framing. In diagrams, we represent $\Omega$ by a black box inserted in a component of the diagrams: 
\begin{equation} \label{eq8}
\Omega=\raisebox{-27 pt}{\includegraphics[bb= 0 0 266 266, width=60 pt]{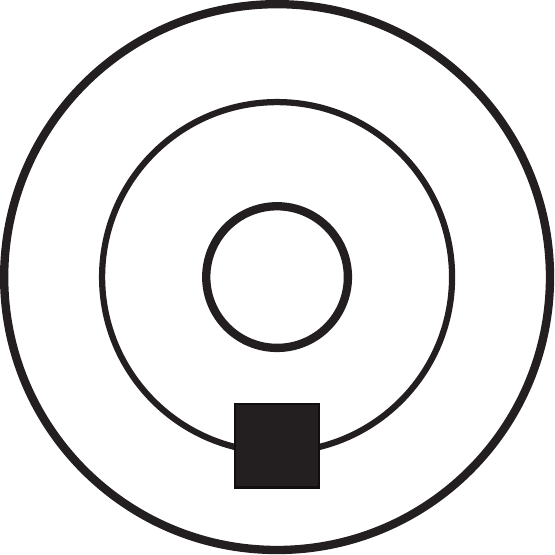}} \,= \sum_{n=0}^{r-2} \Delta_n \alpha_n =\sum_{n=0}^{r-2} \Delta_n \,\raisebox{-27 pt}{\begin{overpic}[bb= 0 0 266 266, width=60 pt]{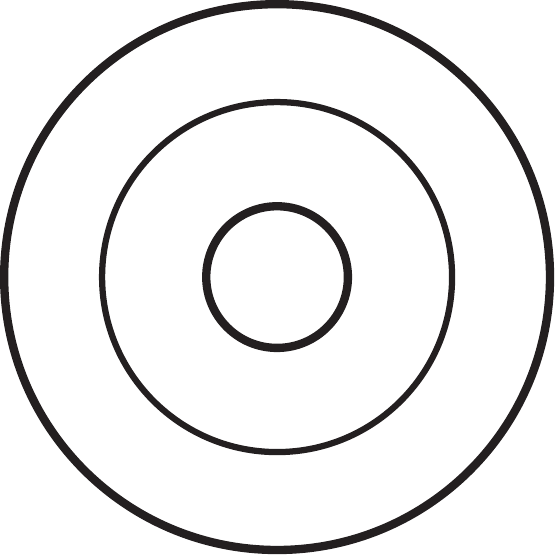}\put(44, 22){$n$}\end{overpic}}\,.
\end{equation}
Let $N$ be the value of the Kauffman bracket of the 0 framed trivial circle in which $\Omega$ is inserted: 
\begin{equation} \label{eq9}
N = \left<\raisebox{-22 pt}{\includegraphics[bb=  0 0 100 108, width=45 pt]{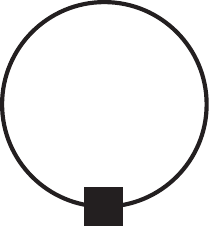}}\right> = \sum_{n=0}^{r-2} \Delta_n \left< \raisebox{-19 pt}{\begin{overpic}[bb= 0 0 171 171, width=45 pt]{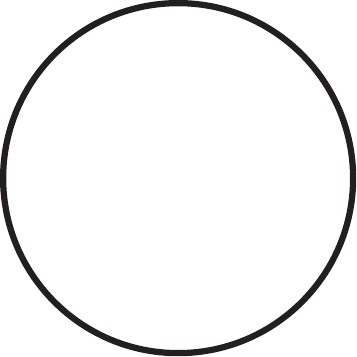}\put(43, 6){$n$} \end{overpic}} \right> =\sum_{n=0}^{r-2}\Delta_n^2= \frac{r}{2 \sin^2 \frac{\pi}{r}}.
\end{equation}
If a trivial $\Omega$ circle rounds strands in which $JW_n$ is inserted, the following relation holds:
\begin{equation} \label{eq10}
\left<\raisebox{-17 pt}{\begin{overpic}[bb= 0 0 292 189, width=60 pt]{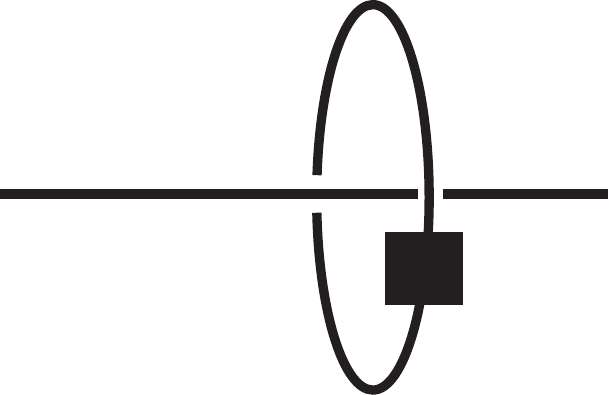}\put(25,37){$n$}\end{overpic}}\right> 
= \left\{
  \begin{array}{ll}
   N  \left<\raisebox{2 pt}{\includegraphics[bb= 0 0 95 21, width=60 pt]{Equation13.pdf}} \right> & \mbox{if } n = 0,\\
     & \\
    \hspace{1cm} 0 & \mbox{if } 1 \leq n \leq r-2.
  \end{array}
  \right.
\end{equation}
The following relation also holds: 
\begin{equation} \label{eq13}
 \left<\,\raisebox{-14 pt}{\begin{overpic}[bb= 0 0 131 63, width=70 pt]{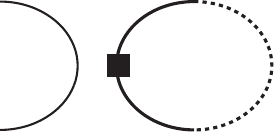}\put(20, 40){}\end{overpic}}\right> = \left< \,\raisebox{-19 pt}{\begin{overpic}[bb= 0 0 146 80, width=80 pt]{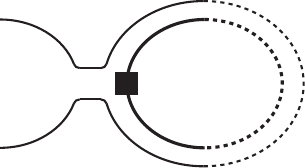}\put(20, 40){}\end{overpic}} \right> ,
\end{equation}
where the right-hand side diagram is a band-sum of a component and a parallel copy of another component in which $\Omega$ is inserted in the left-hand side diagram. 
 \subsection{Some examples}
\par 
First, we calculate our invariants for the genus 2 trivial handlebody-knot $0_1$ by two ways: via the theta curve diagram $D_1$ and via the handcuff diagram $D_2$. Secondly, we calculate our invariants for the genus 2 handlebody-knot called $4_1$ (Fig.~\ref{fig6}) in the table of handlebody-knots given in \cite{IKMS}.

\begin{figure}[ht]
$$\raisebox{-19 pt}{\includegraphics[bb= 0 0 74 75, width=60 pt]{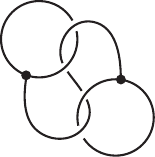}}$$
\caption{Handlebody-knot $4_1$.} \label{fig6}
\end{figure}

\begin{exam}[Theta curve]
\begin{eqnarray*}
& &\left< D_1 \right>_H
=\sum_{(i, j, k)} \Delta_{i} \Delta_{j} \Delta_{k} \left< \raisebox{-24 pt}{\includegraphics[bb= 0 0 109 87, height=45 pt]{thetacurve02.pdf}} \right>_{\mbox{$Y$}} \\
& & \hspace{1.1cm}= \sum_{(i, j, k)}\frac{\Delta_{i} \Delta_{j} \Delta_{k}}{ \theta(i, j, k)^2}\left< \raisebox{-24 pt}{\includegraphics[bb= 0 0 109 87, height=45 pt]{thetacurve02.pdf}} \right>\left< \raisebox{-24 pt}{\includegraphics[bb= 0 0 109 87, height=45 pt]{thetacurve02.pdf}} \right> \\
& & \hspace{1.1cm} = \sum_{(i, j, k)} \Delta_{i} \Delta_{j} \Delta_{k} 
=\sum_{i=0}^{r-2} \sum_{j=0}^{r-2} \Delta_{i}\Delta_{j} \sum_{\substack{k=|i-j|\\i+j+k \in 2\mathbb{Z}}}^{a}\Delta_{k}
=\frac{r^2}{4\sin^4 \frac{\pi}{r}},
\end{eqnarray*}\\
where in the summations $i$, $j$ and $k$ run over all colors such that  $(i,j,k)$ is admissible and $a=r-2-|(r-2)-(i+j)|$.
\end{exam}

\par
The value $\left<\, 0_1\right>_H$ is calculated easier via the handcuff diagram $D_2$ by using the following property.
\begin{prop}[Reducible splitting]  \label{Prop1}
 Let  $D$ be a diagram of a spatial trivalent graph. If $D$ has a cut edge (i.e. an edge that connects two disjoint subgraph diagrams that can be separated by a closed curve from each other), then we have the splitting relation
\[
\left< \raisebox{-17 pt}{\includegraphics[bb= 0 0 100 36, height=40 pt]{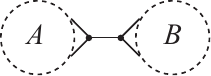}} \right>_{\mbox{$H$}}
=\left< \raisebox{-17 pt}{\includegraphics[bb= 0 0 65 53, height=40 pt]{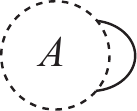}} \right>_{\mbox{$H$}}\left< \raisebox{-17 pt}{\includegraphics[bb= 0 0 65 53, height=40 pt]{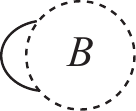}} \right>_{\mbox{$H$}}.
\]
\end{prop}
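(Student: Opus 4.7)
I plan to prove the splitting by a skein-theoretic factorization of the Kauffman bracket across the separating closed curve, followed by a careful collapse of the sum over the bridge color. Fix an admissible coloring of $D$: let $i$ be the bridge's color, and let $(a,b,i)$ and $(c,d,i)$ be the color triples at the trivalent vertices $v_1 \in \Gamma_1$ and $v_2 \in \Gamma_2$ at the bridge's endpoints. A parity count at the triads of $\Gamma_1$ (total strand-ends inside $\Gamma_1$ must be even) shows $i$ is automatically even, so the Jones--Wenzl strand on the bridge is well-defined. I would then expand $\langle D\rangle_H$ via its definition, isolating the factors that depend on $i,a,b,c,d$ so that, after some manipulation, they can be rearranged into a product matching $\langle\Gamma_1\rangle_H\langle\Gamma_2\rangle_H$.

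The key skein-theoretic input is a Kauffman-bracket factorization of the form
\[
\langle D\rangle \;=\; \frac{\langle D_1^c\rangle\,\langle D_2^c\rangle}{\Delta_i},
\]
where $D_j^c$ denotes $D_j$ with the bridge's $JW_i$-strand closed up into a closed $JW_i$-loop attached at $v_j$. This comes from $JW_i$ being a minimal idempotent, so that the $JW_i$-projected part of the skein module on each hemisphere of the separating sphere is one-dimensional; each side therefore represents a scalar multiple of the $JW_i$-strand, and pairing them recovers the closed bracket up to the trace $\Delta_i$. Next I would simplify $\langle D_j^c\rangle$ by recognizing that the closure produces a 4-valent vertex $(a_j,b_j,i,i)$ at $v_j$, the two new $i$-edges being the self-loop. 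Decomposing this 4-valent vertex via relation (\ref{eq1}) yields an internal color $k$ and an $(a_j,b_j,k)$-vertex linked by a $k$-edge to a $(k,i,i)$-tadpole. Reducing the tadpole through an application of (\ref{eq5}) (after inserting a trivial 2-valent vertex on the self-loop to create an honest bubble) forces $k=a_j=b_j$ via the Kronecker $\delta$ and pins the surviving scalar to a product of $\theta$'s and $\Delta$'s. Substituting back, the constraints $a=b$ and $c=d$ collapse the $b,d$-sums; the bridge color $i$ is then summed out using the identities among $\theta,\Delta$, and the remaining expression equals $\langle\Gamma_1\rangle_H\langle\Gamma_2\rangle_H$.

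The main obstacle will be the algebraic bookkeeping in the reduction of $\langle D_j^c\rangle$: one must execute (\ref{eq1}) and (\ref{eq5}) in the correct order, respect the admissibility constraints throughout, and verify that the $\theta,\Delta$ factors arising from the closure combine exactly with Yokota's normalizations $1/\theta(a,b,i)$ and $1/\theta(c,d,i)$ so that the $i$-sum telescopes into a clean $\Delta_a\Delta_c$. The factorization step itself is standard quantum-algebra, but connecting it to the trivalent-graph setup of Yokota's invariant (so that the closure can be interpreted diagrammatically inside each sub-graph $\Gamma_j$) requires care, and one must also ensure that the 2-valent vertex left behind at $v_j$ in $\Gamma_j$ after smoothing is treated consistently with the definition of $\langle\,\cdot\,\rangle_H$.
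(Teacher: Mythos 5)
Your central factorization step is where the argument breaks down. Because the separating curve crosses the diagram only in the bridge, each hemisphere is an element of the skein space of a disc whose boundary carries just the $i$ marked points of one $JW_i$-colored edge end; with $JW_i$ pushed up against the boundary, that space is \emph{zero}-dimensional for every $i\ge 1$ (any crossingless matching of the $i$ points inside the disc has an innermost turnback, which $JW_i$ annihilates) and one-dimensional only for $i=0$. Your justification (``the $JW_i$-projected part of the skein module on each hemisphere is one-dimensional, each side is a scalar multiple of the $JW_i$-strand, pairing gives $\Delta_i$'') is the standard factorization for a sphere meeting a colored strand in \emph{two} points; here the sphere meets the bridge once, there is no ``$JW_i$-strand'' element on either side, and the correct conclusion is the much stronger vanishing: $\left< D(\dots)\right>=0$ unless the bridge color is $0$. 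That vanishing -- Yokota's $\delta_{0i}$ relation (Propositions 4.5 and 4.8 of \cite{YO}), which is exactly what the paper's one-line proof quotes -- is the whole content of the proposition: once only $i=0$ survives, admissibility forces the two edges meeting the bridge at each endpoint to have equal colors, $\theta(a,a,0)=\Delta_a$, the weight $\Delta_0=1$ removes the bridge sum, and the remaining sums factor. Your plan instead keeps all even bridge colors in play and expects the $i$-sum to ``telescope'' through $\theta,\Delta$ identities; if colors $i\neq 0$ genuinely contributed, the product formula would in general be false (the handcuffs computation in Section 4 works precisely because only $i=0$ contributes).

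There are also secondary problems in the proposed reduction of $\left< D_j^c\right>$. Relation (\ref{eq1}) is part of the \emph{definition} of Yokota's invariant at higher-valent vertices, not a Kauffman-bracket identity, so it cannot be used to split a ``4-valent vertex'' inside a plain bracket; the legitimate skein moves are (\ref{eq6}), (\ref{eq7}), (\ref{eq5}). Moreover, the tadpole you produce (a $(k,i,i)$ vertex whose two $i$-legs are joined) vanishes unless $k=0$; it does not force ``$k=a_j=b_j$'' as stated, but $k=0$ and hence $a_j=b_j$ -- which is again the $\delta_{0}$-type vanishing you are missing, resurfacing elsewhere. Finally, the bookkeeping you defer is exactly the normalization issue: deleting the $0$-colored bridge and smoothing the two 2-valent vertices merges pairs of equally colored edges, so one factor $\Delta_a\Delta_c$ of the weight $\Delta_a^2\Delta_c^2$ must be absorbed, matching the $1/(\Delta_{j_1}\Delta_{k_1})$ in Yokota's relation. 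A corrected skein argument (turnback vanishing first, then this bookkeeping) would give a self-contained alternative to the paper's citation of \cite{YO}, but as written the key lemma is unjustified and the summation strategy would not close.
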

\begin{proof}
 The relation directly comes from the following relation of Yokota's invariants (see \cite{YO} Propositions 4.5 and 4.8): 
$$
\left< \raisebox{-17 pt}{\includegraphics[bb= 0 0 112 40, height=40 pt]{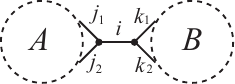}} \right>_{\mbox{$Y$}}
= \frac{\delta_{0i}}{\Delta_{j_1} \Delta_{k_1}} \left< \raisebox{-17 pt}{\includegraphics[bb= 0 0 59 45, height=40 pt]{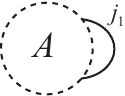}} \right>_{\mbox{$Y$}}\left< \raisebox{-17 pt}{\includegraphics[bb= 0 0 61 47, height=40 pt]{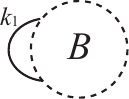}} \right>_{\mbox{$Y$}},
$$
where $(i, j_1, j_2)$ and $(i, k_1, k_2)$ are admissible. Therefore, if $i=0$, then $j_1=j_2$ and $k_1=k_2$.
\end{proof}

\begin{exam}[Handcuff]
\begin{eqnarray*}
& & \hspace{-0.8cm}\left< D_2 \right>_H
= \left< \raisebox{-17 pt}{\includegraphics[bb= 0 0 104 41, height=40 pt]{hand-cuffs01.pdf}} \right>_{\mbox{$H$}} 
= \left< \raisebox{-17 pt}{\includegraphics[bb= 0 0 73 73, height=40 pt]{skein-relation04.pdf}} \right>_{\mbox{$H$}}\left< \raisebox{-17 pt}{\includegraphics[bb= 0 0 73 73, height=40 pt]{skein-relation04.pdf}} \right>_{\mbox{$H$}} \\
\\
& & \hspace{0.3cm}=\sum_{i=0}^{r-2} \left< \raisebox{-17 pt}{\includegraphics[bb= 0 0 70 70, height=40 pt]{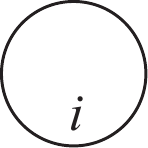}} \right>^{\mbox{2}}\,\,\sum_{j=0}^{r-2} \left< \raisebox{-17 pt}{\includegraphics[bb= 0 0 77 77, height=40 pt]{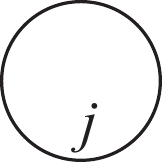}} \right>^{\mbox{2}}
 = \sum_{i=0}^{r-2} \Delta_{i}^2 \sum_{j=0}^{r-2} \Delta_{j}^2
 =\frac{r^2}{4\sin^4 \frac{\pi}{r}}.
\end{eqnarray*}
\end{exam} 
 \par
\begin{exam}[Handlebody-knot $4_1$]
\par
$$\left< 4_1 \right>_H
=\sum_{\substack{(i, i, k)\\(j,j,k)}} \Delta_{i} \Delta_{j} \Delta_{k} \left< \raisebox{-16 pt}{\includegraphics[bb= 0 0 80 75, width=40 pt]{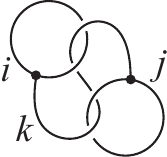}} \right>_{\mbox{$Y$}}
=\sum_{\substack{(i, i, k)\\(j,j,k)}} \frac{\Delta_{i} \Delta_{j} \Delta_{k}}{\theta(i, i, k) \theta(j, j, k)}\left|\left< \raisebox{-16 pt}{\includegraphics[bb= 0 0 80 75,width=40 pt]{hand-cuffs-1-left.pdf}} \right>\right|^{\mbox{2}}.$$
Here
\allowdisplaybreaks{\begin{eqnarray*}
& &\hspace{0.0cm}\left< \raisebox{-16 pt}{\includegraphics[bb= 0 0 80 75, width=40 pt]{hand-cuffs-1-left.pdf}} \right>
\overset{(\ref{eq7})}{=}\sum_{\substack{(i,k,l)\\(j,k,m)}}\frac{\Delta_{l} \Delta_{m}}{\theta(i, k, l) \theta(j, k, m)}\left< \raisebox{-15 pt}{\includegraphics[bb= 0 0 84 75, width=40 pt]{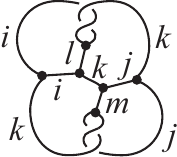}} \right>\\
& &\overset{(\ref{eq4})}{=}\sum_{\substack{(i,k,l)\\(j,k,m)}}\frac{\Delta_{l} \Delta_{m} \left(\lambda_{l}^{ik} \right)^{-2} \left(\lambda_{m}^{jk} \right)^{-2}}{\theta(i, k, l) \theta(j, k, m)}\left< \raisebox{-15 pt}{\includegraphics[bb= 0 0 84 75, width=40 pt]{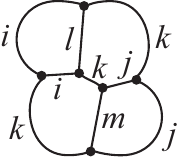}} \right>\\
& &\overset{(\ref{eq6})}{=}\sum_{\substack{(i,k,l),\,(j,k,m)\\(i,i,s),\,(k,k,s)\\(j,j,t),\,(k,k,t)}}\frac{\Delta_{l} \Delta_{m} \Delta_{s} \Delta_{t} \left(\lambda_{l}^{ik} \right)^{-2} \left(\lambda_{m}^{jk} \right)^{-2}}{\theta(i, k, l) \theta(j, k, m) \theta(i, i, s) \theta(k, k, s) \theta(j, j, t) \theta(k, k, t)}\\
& &\hspace{6.0cm}\times\left[ \begin{array}{ccc}  i & k & l \\ k & i & s \\ \end{array} \right]\left[ \begin{array}{ccc}  j & k & m \\ k & j & t \\ \end{array} \right]\left< \raisebox{-15 pt}{\includegraphics[bb= 0 0 86 77, width=40 pt]{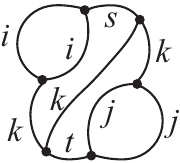}} \right>\\
& &\overset{(\ref{eq5})}{=}\sum_{\substack{(i,k,l),\,(j,k,m)\\(i,i,s),\,(k,k,s)\\(j,j,t),\,(k,k,t)}}\frac{\Delta_{l} \Delta_{m} \Delta_{s} \Delta_{t} \left(\lambda_{l}^{ik} \right)^{-2} \left(\lambda_{m}^{jk} \right)^{-2}}{\theta(i, k, l) \theta(j, k, m) \theta(i, i, s) \theta(k, k, s) \theta(j, j, t) \theta(k, k, t)}\\
& &\hspace{3.0cm}\times\left[ \begin{array}{ccc}  i & k & l \\ k & i & s \\ \end{array} \right]\left[ \begin{array}{ccc}  j & k & t \\ k & j & m \\ \end{array} \right]\frac{\delta_{ks} \delta_{kt} \theta(i, i, k) \theta(j, j, k)}{\Delta_k^2}\theta(k,k,k)\\
& &\hspace{0.2cm}= \sum_{\substack{(i,k,l),\,(j,k,m)\\(i,i,k),\,(j,j,k)\\(k,k,k)}} \frac{\Delta_{l}\Delta_{m}  \left(\lambda_{l}^{ik} \right)^{-2} \left(\lambda_{m}^{jk} \right)^{-2}}{\theta(i, k, l) \theta(j, k, m) \theta(k, k, k)}\left[ \begin{array}{ccc}  i & k & l \\ k & i & k \\ \end{array} \right]\left[ \begin{array}{ccc}  j & k & m \\ k & j & k \\ \end{array} \right].
\end{eqnarray*}}
Thus we have
\begin{eqnarray*}
& &\hspace{0.0cm}\left< 4_1 \right>_H
= \sum_{\substack{(i, i, k)\\(j,j,k)}} \frac{\Delta_{i} \Delta_{j} \Delta_{k}}{\theta(i, i, k) \theta(j, j, k)}\\
& &\hspace{2.5cm}\times \Biggr| \sum_{\substack{(i,k,l),\,(j,k,m)\\(i,i,k),\,(j,j,k)\\(k,k,k)}} \frac{\Delta_{l}\Delta_{m}  \left(\lambda_{l}^{ik} \right)^{-2} \left(\lambda_{m}^{jk} \right)^{-2}}{\theta(i, k, l) \theta(j, k, m) \theta(k, k, k)}\left[ \begin{array}{ccc}  i & k & l \\ k & i & k \\ \end{array} \right]\left[ \begin{array}{ccc}  j & k & m \\ k & j & k \\ \end{array} \right] \Biggr|^{\mbox{2}}.
\end{eqnarray*}
\end{exam}

\subsection{Table of values of $\left< \, \cdot \, \right>_H$}
 \par
We did numerical calculations of the values of $\left< \, \cdot \, \right>_H$ for genus 2 irreducible handlebody-knots up to six crossings classified in \cite{IKMS}. The results for $3 \leq r \leq 10$ are in Table \ref{table1}. We show them to the sixth decimal places.
We comment on some features of Table \ref{table1}. When $r=3$ and $4$, all values of $\left< \, \cdot \, \right>_H$ are the same, 4 and 16 respectively. In case $r=3$, we have the following proposition.

\begin{prop}
Let $J$ be a handlebody-link and $D$ be a diagram of $J$. By a crossing change at a crossing point of $D$, we have another diagram $D'$ for a handlebody-link $J'$. If $r=3$, $\left< J \right>_H = \left< J' \right>_H$ i.e. $\left< \, \cdot \, \right>_H$ can not detect a crossing change. Hence for any $J$, $\left< J \right>_H = \left< J_0 \right>_H = 2^g$, where $g$ is the sum of the genera of the components of $J$ and $J_0$ is the trivial handlebody-link that has the same genera of the components with $J$. The second equation comes from Proposition \ref{Prop1} and $\left< \bigcirc \right>_H =2$ when $r=3$.
\end{prop}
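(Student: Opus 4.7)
The strategy is to prove the stronger statement that $\langle J\rangle_H = 2^g$ for \emph{every} diagram of $J$ at $r=3$; since the right-hand side depends only on the total handlebody-genus, the crossing-change invariance $\langle J\rangle_H = \langle J'\rangle_H$ follows as an immediate corollary, as does $\langle J\rangle_H = \langle J_0\rangle_H$.

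First I specialise the Yokota sum at $r=3$. The color set is $I=\{0,1\}$ with $\Delta_0=1$ and $\Delta_1=-1$; the admissible triples are $(0,0,0)$ (with $\theta=1$) and the three permutations of $(0,1,1)$ (each with $\theta=-1$). An admissible coloring $\chi$ has $0$ or $2$ incident $1$-colored edges at each trivalent vertex, so a double count gives $\#\{1\text{-colored edges}\}=\#\{(0,1,1)\text{-type vertices}\}$, and the signs from $\prod_e\Delta_{\chi(e)}=(-1)^{E_1}$ and from $\prod_v\theta(\chi)=(-1)^{V_1}$ cancel. Hence
\[
\langle J\rangle_H \;=\; \sum_{\chi\text{ admissible}} |\langle D_\chi\rangle|^2,
\]
where $D_\chi$ is the link diagram obtained from $D$ by erasing the $0$-colored edges/circles and keeping the $1$-colored ones as single strands. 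Admissibility says exactly that the $1$-colored edges represent an $\mathbb{F}_2$-cycle of $\Gamma$, so the admissible colorings are parametrised by the $\mathbb{F}_2$-cycle space of $\Gamma$, a vector space of dimension $b_1(\Gamma)=g$; there are $2^g$ of them.

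The main step is to show $|\langle L\rangle|=1$ for every link diagram $L$ at $A=e^{i\pi/6}$. Using the standard conversion $\langle L\rangle = (-A)^{3w(L)}\,d\,V_L(A^{-4})$ with $|d|=|{-}A|=1$, this reduces to showing $|V_L(\omega)|=1$ at $\omega:=A^{-4}=e^{-2\pi i/3}$, a primitive third root of unity. I will prove the stronger identity $V_L(\omega)=(-1)^{c(L)-1}$ where $c(L)$ is the number of components. By uniqueness of the Jones polynomial (determined by its skein relation together with $V_{\bigcirc}=1$), it suffices to verify consistency with $t^{-1}V(L_+)-tV(L_-)=(t^{1/2}-t^{-1/2})V(L_0)$ at $t=\omega$. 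Since $c(L_+)=c(L_-)=:c_0$ and the oriented smoothing satisfies $c(L_0)\equiv c_0+1\pmod 2$, the formula makes the two sides equal to $(-1)^{c_0-1}(t^{-1}-t)$ and $(-1)^{c_0}(t^{1/2}-t^{-1/2})$; via the factorisation $t-t^{-1}=(t^{1/2}-t^{-1/2})(t^{1/2}+t^{-1/2})$, these agree precisely when $t^{1/2}+t^{-1/2}=1$. The linchpin is $t^{1/2}+t^{-1/2}=2\cos(\pi/3)=1$ at $t=\omega$, which validates the consistency.

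Combining, each of the $2^g$ admissible colorings contributes $|\langle D_\chi\rangle|^2=1$, so $\langle J\rangle_H = 2^g$ regardless of the diagram. This immediately yields $\langle J\rangle_H=\langle J'\rangle_H$ under any crossing change, and the equality $\langle J\rangle_H=\langle J_0\rangle_H=2^g$ is a byproduct (and is independently recoverable from Proposition~\ref{Prop1} together with $\langle\bigcirc\rangle_H=2$ at $r=3$). The main obstacle is the magnitude-one claim for $V_L(\omega)$; its proof hinges on the special identity $t^{1/2}+t^{-1/2}=1$ at $\omega$ that collapses the Jones skein into a uniform parity formula, after which everything else is bookkeeping.
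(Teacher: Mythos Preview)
Your argument is correct and takes a genuinely different route from the paper's proof.

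The paper establishes crossing-change invariance directly at the level of the Kauffman bracket: it expands the bracket at the distinguished crossing in $\langle D\rangle\langle\overline{D}\rangle$ and in $\langle D'\rangle\langle\overline{D'}\rangle$, reduces the difference to the ``mixed'' expression $\langle\,\asymp\,\rangle\langle\,)\,(\,\rangle - \langle\,)\,(\,\rangle\langle\,\asymp\,\rangle$, then resolves all remaining crossings and observes that in every resulting state the two terms differ in their total circle count by an even number; since $d=\Delta_1=-1$ at $r=3$, the terms cancel. The value $2^g$ is then obtained a posteriori by unknotting $J$ to the trivial handlebody-link and invoking Proposition~\ref{Prop1}.

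You instead compute $\langle J\rangle_H=2^g$ outright. Your key steps --- that admissible colorings at $r=3$ are exactly the $\mathbb{F}_2$-cycles of $\Gamma$ (hence $2^g$ of them), that the sign $\prod_e\Delta_{\chi(e)}/\prod_v\theta$ collapses to $+1$ via $E_1=V_1$, and that each surviving term is $|\langle D_\chi\rangle|^2$ for a genuine link diagram --- are all correct (with the convention $\langle\emptyset\rangle=1$ covering the all-zero coloring). The identity $V_L(\omega)=(-1)^{c(L)-1}$ at $\omega=e^{-2\pi i/3}$ is classical; your verification via the skein relation and the coincidence $\omega^{1/2}+\omega^{-1/2}=1$ is the standard one, and the inductive uniqueness works because the skein coefficients $\omega^{\pm1}$ are nonzero. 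Combined with $|{-}A^3|=|d|=1$, this yields $|\langle D_\chi\rangle|=1$ as needed.

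In short: the paper's proof is self-contained skein combinatorics and proves invariance first, value second; your proof imports the cube-root-of-unity evaluation of the Jones polynomial and obtains the value (and hence invariance) in one stroke. Both are valid; yours is more conceptual but leans on an external (though easily re-derived) fact, while the paper's stays entirely within the Kauffman-bracket calculus already set up.
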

\begin{proof}
Since $r=3$, we have $I=\{0,1\}$, $A={\rm e}^{\pi i/6}$ and $\Delta_1=-1$. We check that the difference of the values of the Kauffman brackets in the definition of $\left< \, \cdot \, \right>_H$ for the two diagrams which differ only at a crossing point $c$ is 0. We focus on the neighborhood of $c$. If one color of the two strands of $c$ is 0, the crossing change causes nothing. Therefore, it is enough to check the case where the both colors are 1.
\begin{eqnarray*}
& &\hspace{-0.1cm} \Biggl< \hspace{0.1cm} \raisebox{-10 pt}{\includegraphics[bb= 0 0 100 100, width=25 pt]{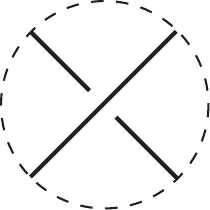}} \hspace{0.1cm} \Biggr>\Biggl< \hspace{0.1cm} \raisebox{-10 pt}{\includegraphics[bb= 0 0 100 100, width=25 pt]{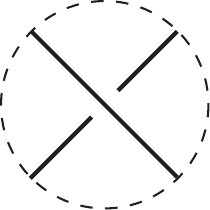}} \hspace{0.1cm} \Biggr> = A \Biggl< \hspace{0.1cm} \raisebox{-10 pt}{\includegraphics[bb= 0 0 100 100, width=25 pt]{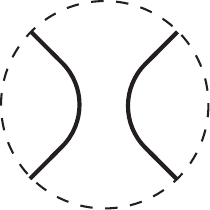}} \hspace{0.1cm} \Biggr>\Biggl< \hspace{0.1cm} \raisebox{-10 pt}{\includegraphics[bb= 0 0 100 100, width=25 pt]{skein-relation01-s.pdf}} \hspace{0.1cm} \Biggr>+ A^{-1}\Biggl< \hspace{0.1cm} \raisebox{-10 pt}{\includegraphics[bb= 0 0 100 100, width=25 pt]{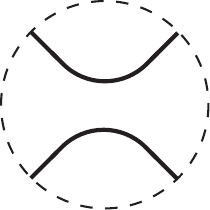}} \hspace{0.1cm} \Biggr>\Biggl< \hspace{0.1cm} \raisebox{-10 pt}{\includegraphics[bb= 0 0 100 100, width=25 pt]{skein-relation01-s.pdf}} \hspace{0.1cm} \Biggr> \\
& & \hspace{3.3cm} = A^2 \Biggl< \hspace{0.1cm} \raisebox{-10 pt}{\includegraphics[bb= 0 0 100 100, width=25 pt]{skein-relation02-s.pdf}} \hspace{0.1cm} \Biggr>\Biggl< \hspace{0.1cm} \raisebox{-10 pt}{\includegraphics[bb= 0 0 100 100, width=25 pt]{skein-relation03-s.pdf}} \hspace{0.1cm} \Biggr>+\Biggl< \hspace{0.1cm} \raisebox{-10 pt}{\includegraphics[bb= 0 0 100 100, width=25 pt]{skein-relation02-s.pdf}} \hspace{0.1cm} \Biggr>\Biggl< \hspace{0.1cm} \raisebox{-10 pt}{\includegraphics[bb= 0 0 100 100, width=25 pt]{skein-relation02-s.pdf}} \hspace{0.1cm} \Biggr>\\
& & \hspace{4.4cm} +\Biggl< \hspace{0.1cm} \raisebox{-10 pt}{\includegraphics[bb= 0 0 100 100, width=25 pt]{skein-relation03-s.pdf}} \hspace{0.1cm} \Biggr>\Biggl< \hspace{0.1cm} \raisebox{-10 pt}{\includegraphics[bb= 0 0 100 100, width=25 pt]{skein-relation03-s.pdf}} \hspace{0.1cm} \Biggr>+ A^{-2}\Biggl< \hspace{0.1cm} \raisebox{-10 pt}{\includegraphics[bb= 0 0 100 100, width=25 pt]{skein-relation03-s.pdf}} \hspace{0.1cm} \Biggr>\Biggl< \hspace{0.1cm} \raisebox{-10 pt}{\includegraphics[bb= 0 0 100 100, width=25 pt]{skein-relation02-s.pdf}} \hspace{0.1cm} \Biggr>.
\end{eqnarray*}
Similarly, we have
\begin{eqnarray*}
& & \hspace{-0.1cm}\Biggl< \hspace{0.1cm} \raisebox{-10 pt}{\includegraphics[bb= 0 0 100 100, width=25 pt]{skein-relation01-s.pdf}} \hspace{0.1cm} \Biggr>\Biggl< \hspace{0.1cm} \raisebox{-10 pt}{\includegraphics[bb= 0 0 100 100, width=25 pt]{skein-relation00-s.pdf}} \hspace{0.1cm} \Biggr> = A^2 \Biggl< \hspace{0.1cm} \raisebox{-10 pt}{\includegraphics[bb= 0 0 100 100, width=25 pt]{skein-relation03-s.pdf}} \hspace{0.1cm} \Biggr>\Biggl< \hspace{0.1cm} \raisebox{-10 pt}{\includegraphics[bb= 0 0 100 100, width=25 pt]{skein-relation02-s.pdf}} \hspace{0.1cm} \Biggr> + \Biggl< \hspace{0.1cm} \raisebox{-10 pt}{\includegraphics[bb= 0 0 100 100, width=25 pt]{skein-relation03-s.pdf}} \hspace{0.1cm} \Biggr>\Biggl< \hspace{0.1cm} \raisebox{-10 pt}{\includegraphics[bb= 0 0 100 100, width=25 pt]{skein-relation03-s.pdf}} \hspace{0.1cm} \Biggr> \\
& & \hspace{4.3cm} + \Biggl< \hspace{0.1cm} \raisebox{-10 pt}{\includegraphics[bb= 0 0 100 100, width=25 pt]{skein-relation02-s.pdf}} \hspace{0.1cm} \Biggr>\Biggl< \hspace{0.1cm} \raisebox{-10 pt}{\includegraphics[bb= 0 0 100 100, width=25 pt]{skein-relation02-s.pdf}} \hspace{0.1cm} \Biggr>+ A^{-2} \Biggl< \hspace{0.1cm} \raisebox{-10 pt}{\includegraphics[bb= 0 0 100 100, width=25 pt]{skein-relation02-s.pdf}} \hspace{0.1cm} \Biggr>\Biggl< \hspace{0.1cm} \raisebox{-10 pt}{\includegraphics[bb= 0 0 100 100, width=25 pt]{skein-relation03-s.pdf}} \hspace{0.1cm} \Biggr>.
\end{eqnarray*}
To show the difference between the above two equations is 0, it is sufficient to check that
\begin{equation} \label{eq14}
\Biggl< \hspace{0.1cm} \raisebox{-10 pt}{\includegraphics[bb= 0 0 100 100, width=25 pt]{skein-relation02-s.pdf}} \hspace{0.1cm} \Biggr>\Biggl< \hspace{0.1cm} \raisebox{-10 pt}{\includegraphics[bb= 0 0 100 100, width=25 pt]{skein-relation03-s.pdf}} \hspace{0.1cm} \Biggr>
-\Biggl< \hspace{0.1cm} \raisebox{-10 pt}{\includegraphics[bb= 0 0 100 100, width=25 pt]{skein-relation03-s.pdf}} \hspace{0.1cm} \Biggr>\Biggl< \hspace{0.1cm} \raisebox{-10 pt}{\includegraphics[bb= 0 0 100 100, width=25 pt]{skein-relation02-s.pdf}} \hspace{0.1cm} \Biggr>=0.
\end{equation}
Using the second relation of the skein space, we smooth all crossings in the outside of the neighborhood of $c$. There are four possible cases for the outside arcs which connect the endpoints of the arcs in the neighborhood of $c$.
$$\Biggl< \hspace{0.15cm} \raisebox{-19 pt}{\includegraphics[bb= 0 0 100 198, width=22 pt]{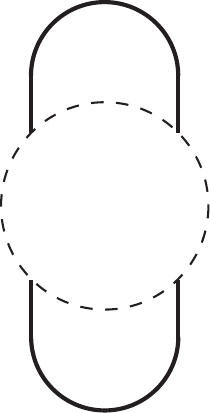}} \hspace{0.15cm} \Biggr>\Biggl< \hspace{0.15cm} \raisebox{-19 pt}{\includegraphics[bb= 0 0 100 198, width=22 pt]{skein-relation05-s2.pdf}} \hspace{0.15cm} \Biggr>, 
\hspace{0.5cm}\Biggl< \hspace{0.15cm} \raisebox{-19 pt}{\includegraphics[bb= 0 0 100 198, width=22 pt]{skein-relation05-s2.pdf}} \hspace{0.15cm} \Biggr>\Biggl< \hspace{-0.1cm} \raisebox{-8 pt}{\includegraphics[bb= 0 0 198 100, height=22 pt]{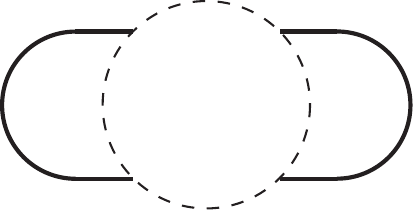}} \hspace{-0.1cm} \Biggr>,$$
$$ \Biggl< \hspace{-0.1cm} \raisebox{-8 pt}{\includegraphics[bb= 0 0 198 100, height=22 pt]{skein-relation06-s2.pdf}} \hspace{-0.1cm} \Biggr>\Biggl< \hspace{0.15cm} \raisebox{-19 pt}{\includegraphics[bb= 0 0 100 198, width=22 pt]{skein-relation05-s2.pdf}} \hspace{0.15cm} \Biggr>,
\hspace{0.5cm} \Biggl< \hspace{-0.1cm} \raisebox{-8 pt}{\includegraphics[bb= 0 0 198 100, height=22 pt]{skein-relation06-s2.pdf}} \hspace{-0.1cm} \Biggr>\Biggl< \hspace{-0.1cm} \raisebox{-8 pt}{\includegraphics[bb= 0 0 198 100, height=22 pt]{skein-relation06-s2.pdf}} \hspace{-0.1cm} \Biggr>.$$
For any case, the difference of the numbers of the circles in Eq.~(\ref{eq14}) is even. Since $\Delta_1=-1$, Eq.~(\ref{eq14}) holds. 
\end{proof}

\begin{table}[bht] \label{table1}
{\small
{\begin{tabular}{|c||c|c|c|c|c|c|}
\hline
r & $0_1$ & $4_1$ & $5_1$ & $5_2$ & $5_3$ & $5_4$ \\
\hline 
3 & 4 & 4 & 4 & 4 & 4 & 4 \\
\hline
4 & 16 & 16 & 16 & 16 & 16 & 16  \\
\hline
5 & \hphantom{000}52.360679 & \hphantom{000}84.721359 & \hphantom{000}32.360679 & \hphantom{000}84.721359 & \hphantom{000}64.721359 & \hphantom{000}84.721359 \\
\hline
6 & 144  & 216 & 144 & 216 & 144 & 144 \\
\hline
7 & \hphantom{00}345.654799 & \hphantom{00}499.485657 & \hphantom{00}376.122887 & \hphantom{00}499.485657 & \hphantom{00}351.689378 & \hphantom{00}567.946830 \\
\hline
8 & \hphantom{00}746.038672 & \hphantom{0}1364.077344 & \hphantom{00}927.058008 & \hphantom{0}1364.077344 & \hphantom{00}799.058008 & \hphantom{0}1982.116016\\
\hline
9 & \hphantom{0}1479.852018 & \hphantom{0}4090.669434 & \hphantom{0}1387.831255 & \hphantom{0}2578.238436 & \hphantom{0}1719.664275 & \hphantom{0}4393.545672\\
\hline
10 & \hphantom{0}2741.640786 & \hphantom{0}8429.906831 & \hphantom{0}2618.033989 & \hphantom{0}4636.067977 & \hphantom{0}3218.033989 & 10319.349550\\
\hline
\cline{1-7}
r &  $6_1$ & $6_2$ & $6_3$ & $6_4$ & $6_5$ & $6_6$ \\
\hline 
3 & 4 & 4 & 4 & 4 & 4 & 4\\
\hline
4 & 16 & 16 & 16 & 16 & 16 & 16\\
\hline
5 & \hphantom{000}64.721359 & \hphantom{000}64.721359 & \hphantom{000}44.721359 & \hphantom{000}32.360679 & \hphantom{000}64.721359 & \hphantom{000}64.721359\\
\hline
6 & 144 & 144 & 144 & 144 & 144 & 144\\
\hline
7 & \hphantom{00}406.590975 & \hphantom{00}351.689378 & \hphantom{00}302.822359 & \hphantom{00}376.122887 & \hphantom{00}351.689378 & \hphantom{00}351.689378\\
\hline
8 & \hphantom{0}1108.077344 & \hphantom{00}799.058008 & \hphantom{00}543.058008 & \hphantom{00}927.058008 & \hphantom{00}799.058008 & \hphantom{00}799.058008\\
\hline
9 & \hphantom{0}2323.704917 & \hphantom{0}1719.664275 & \hphantom{0}1127.311315 & \hphantom{0}1387.831255 & \hphantom{0}1719.664275 & \hphantom{0}1719.664275 \\
\hline
10 & \hphantom{0}4712.461179 & \hphantom{0}3218.033989 & 2200 & \hphantom{0}2618.033989 & \hphantom{0}3218.033989 & \hphantom{0}3218.033989\\
\hline
\cline{1-7}
r & $6_7$ & $6_8$ & $6_9$ & $6_{10}$ & $6_{11}$ & $6_{12}$\\
\hline
3  & 4 & 4 & 4 & 4 & 4 & 4 \\
\hline
4  & 16 & 16 & 16 & 16 & 16 & 16\\
\hline
5  & \hphantom{000}57.082039 & \hphantom{000}44.721359 & \hphantom{000}77.082039 & \hphantom{000}97.082039 & \hphantom{000}44.721359 & \hphantom{000}64.721359\\
\hline
6  & 144 & 144 & 216 & 144 & 144 & 144\\
\hline
7  & \hphantom{00}638.798446 & \hphantom{00}401.751619 & \hphantom{00}544.708542  & \hphantom{00}420.150550 & \hphantom{00}272.354271 & \hphantom{00}628.883006\\
\hline
8  & \hphantom{0}1214.116016 & \hphantom{00}980.077343 & \hphantom{0}1470.116016 & \hphantom{0}1470.116016 & \hphantom{00}415.058008 & \hphantom{0}1108.077344\\
\hline
9 & \hphantom{0}2011.724100 & \hphantom{0}1774.998117 & \hphantom{0}2568.781545 & \hphantom{0}2753.108230 & \hphantom{0}1004.001433 & \hphantom{0}2131.161018\\
\hline
10 & \hphantom{0}3905.572809 & \hphantom{0}2923.606798 & \hphantom{0}4970.820393 & \hphantom{0}5912.461179 & 2200 & \hphantom{0}4388.854382\\
\hline
\cline{1-5}
r  & $6_{13}$ & $6_{14}$ & $6_{15}$ & $6_{16}$ \\
\cline{1-5}
3 & 4 & 4 & 4 & 4 \\
\cline{1-5}
4  & 16 & 16 & 16 & 16\\
\cline{1-5}
5  & \hphantom{000}84.721359 & \hphantom{000}84.721359 & \hphantom{000}84.721359 & \hphantom{000}64.721359 \\
\cline{1-5}
6  & 216 & 288 & 288 & 144 \\
\cline{1-5}
7  & \hphantom{00}499.485657 & \hphantom{00}721.777687 & \hphantom{00}721.777687 & \hphantom{00}872.259607 \\
\cline{1-5}
8  & \hphantom{0}1364.077344 & \hphantom{0}1982.116016 & \hphantom{0}1982.116016 & \hphantom{0}2706.193359 \\
\cline{1-5}
9  & \hphantom{0}2578.238436 & \hphantom{0}3879.598459 & \hphantom{0}3879.598459 & \hphantom{0}3367.578579 \\
\cline{1-5}
10 & \hphantom{0}4636.067977 & \hphantom{0}7577.708764 & \hphantom{0}7577.708764 & \hphantom{0}5018.033989 \\
\cline{1-5}
\end{tabular}}
}
\caption{The values of $\left< \, \cdot \, \right>_H$.} \label{table1}
\end{table}

\par
Comparing $4_1$ and $5_2$, the values are the same up to $r=7$ and differ after that. We do not know when values of two handlebody-links begin to differ. In the table, there are four sets we could not distinguish by our invariants for $r \leq 10$. These sets are $\{ 5_1, 5_2, 6_4, 6_{13} \}$, $\{ 5_3, 6_2 \}$, $\{ 6_5, 6_6 \}$ and $\{ 6_{14}, 6_{15} \}$. There are two relations of handlebody-links that $\left< \, \cdot \, \right>_H$ cannot detect.

\begin{prop} \label{Prop2}
 Let $J_1$ and $J_2$ be two handlebody-links and $D_1$ and $D_2$ be their diagrams. If $D_1$ is divided into two parts by a simple circle that intersects $D_1$ at at most three points and $D_1$ changes to $D_2$ by taking the mirror image of one of the parts, then $\left< J_1 \right>_H = \left< J_2 \right>_H$.
\end{prop}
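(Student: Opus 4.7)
My approach is to reduce the equality $\langle J_1 \rangle_H = \langle J_2 \rangle_H$ to a coloring-by-coloring comparison of $|\langle D_1\rangle|^2$ and $|\langle D_2\rangle|^2$. Since $D_1$ and $D_2$ represent trivalent graphs with the same abstract combinatorial structure (they differ only at crossings inside the mirrored region), every weight $\Delta_{i_k}$ and every denominator $\theta(i_a,i_b,i_c)$ appearing in the definition of $\langle\,\cdot\,\rangle_H$ is identical for the two diagrams, so it suffices to prove $|\langle D_1\rangle|^2 = |\langle D_2\rangle|^2$ for each fixed admissible coloring.

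Fix such a coloring, let $\gamma$ be the separating simple closed curve, let $T$ be the outside tangle and $T'$ the inside tangle, and let $k\leq 3$ be the number of points where $\gamma$ meets $D_1$. Each of the $k$ strands crossing $\gamma$ carries a color $c_j$, so after inserting the Jones-Wenzl idempotent $JW_{c_j}$ at each intersection (which is free by idempotency), $T'$ represents an element of the skein module of the disk with $k$ Jones-Wenzl-colored boundary points. For $k\leq 3$ this module is at most one-dimensional: spanned by the empty diagram when $k=0$; nonzero only for $c_1=0$ when $k=1$, and then spanned by the trivial arc; nonzero only for $c_1=c_2$ when $k=2$, spanned by the straight JW-colored strand (this is the content of the bubble identity (\ref{eq5})); and nonzero only for admissible $(c_1,c_2,c_3)$ when $k=3$, spanned by the trivalent vertex. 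In each nontrivial case the spanning element $S$ is a crossingless planar diagram, so $S$ is mirror-invariant, and I may write $T' = \alpha\,S$ in the skein module for some scalar $\alpha\in\mathbb{C}$.

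The mirror-image operation on the skein module is antilinear, because $A=e^{2\pi i/4r}$ satisfies $A^{-1}=\bar A$ and Kauffman's expansion of each crossing uses the coefficients $A$ and $A^{-1}$, so reversing all crossings conjugates the coefficient of $S$. Therefore the mirrored tangle $T'^{*}$ equals $\bar\alpha\,S$. Writing $\beta := \langle T\cup S\rangle$ for the Kauffman bracket obtained by gluing $T$ to the spanning element $S$, one gets $\langle D_1\rangle = \alpha\beta$ and $\langle D_2\rangle = \bar\alpha\beta$, whence $|\langle D_1\rangle|^2 = |\alpha|^2|\beta|^2 = |\langle D_2\rangle|^2$; summing over admissible colorings yields $\langle J_1\rangle_H = \langle J_2\rangle_H$. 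The main step requiring care is the one-dimensionality claim for $k=3$, which I would deduce by closing $T'$ with a second trivalent vertex to obtain a theta graph, yielding the explicit formula $\alpha = \langle T'\cup S\rangle/\theta(c_1,c_2,c_3)$ using the fact that $\theta(c_1,c_2,c_3)\neq 0$ on admissible triples.
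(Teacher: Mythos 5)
Your proof is correct, and it rests on the same decomposition as the paper's: cut $D_1$ along the separating circle and exploit the fact that the skein space of a disk with at most three Jones--Wenzl--colored boundary points is at most one-dimensional, which is precisely what the paper's relations (\ref{eq11}) and (\ref{eq12}) (together with Proposition \ref{Prop1} for the $0$- and $1$-point cases) encode. Where you genuinely differ is in the finishing move. The paper inserts an $\Omega$-colored circle around the strands, applies (\ref{eq11}) (resp.\ (\ref{eq12})) to write $\langle D\rangle\langle\overline{D}\rangle$ as a product of brackets of the closed-off halves, and then observes that this product is symmetric under exchanging one half with its mirror, using $\langle D\sqcup D'\rangle=\langle D\rangle\langle D'\rangle$. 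You instead observe that the mirror map acts antilinearly on the skein space, so the inside tangle's coordinate $\alpha$ with respect to the crossingless spanning element becomes $\overline{\alpha}$, which the modulus-squared in Yokota's invariant cannot detect. Your route is cleaner and avoids the explicit $\Omega$-circle manipulation; what it buys is a one-line conceptual reason why only the $|\cdot|^2$ structure matters. Two small points you should make explicit to close it fully: (i) the antilinearity claim needs not just $A^{-1}=\overline{A}$ at crossings but also that all other coefficients arising when the inside tangle is expanded into crossingless diagrams are real --- this holds because the recursion for $JW_n$ has real coefficients $\Delta_{n-1}/\Delta_n$ and the loop value $-(A^2+A^{-2})$ is real; and (ii) the spanning elements (empty diagram, single colored strand, trivalent vertex) are fixed by the mirror map because they are crossingless planar diagrams, which you do note. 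Neither point is a gap in substance.
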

\begin{proof}
From Eqs.~(\ref{eq7}) and (\ref{eq10}), we have the following two relations of the Kauffman bracket for parts of diagrams: 
\begin{equation}\label{eq11} \left<\raisebox{-22 pt}{\begin{overpic}[bb= 0 0 261 197, width=60 pt]{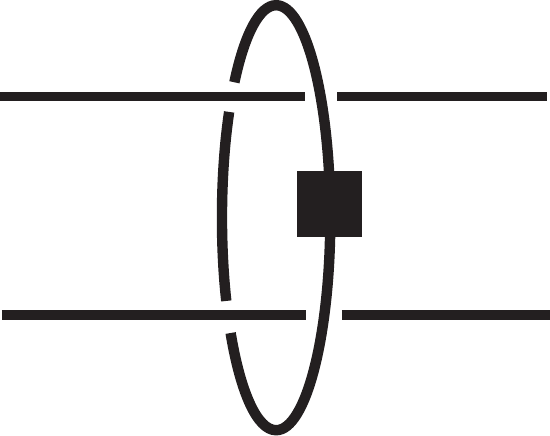}\put(18, 30){$j$}\put(19, 67){$i$}\end{overpic}}\right> =\frac{\delta_{ij}}{\Delta_i} \left<\raisebox{-17 pt}{\begin{overpic}[bb= 0 0 266 191, width=55 pt]{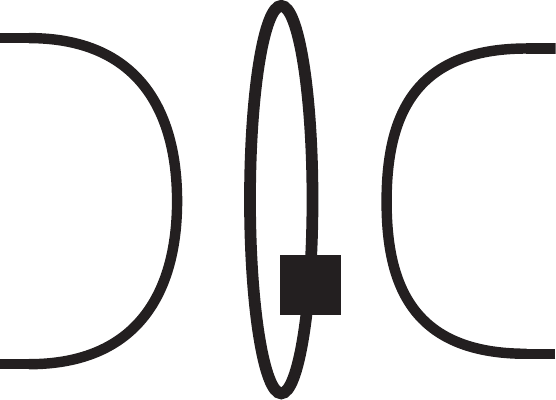}\put(15, 17){$i$}\put(80, 17){$i$}\end{overpic}}\right>.\end{equation}
\begin{equation}\label{eq12}\left<\raisebox{-20 pt}{\begin{overpic}[bb= 0 0 263 209, width=60 pt]{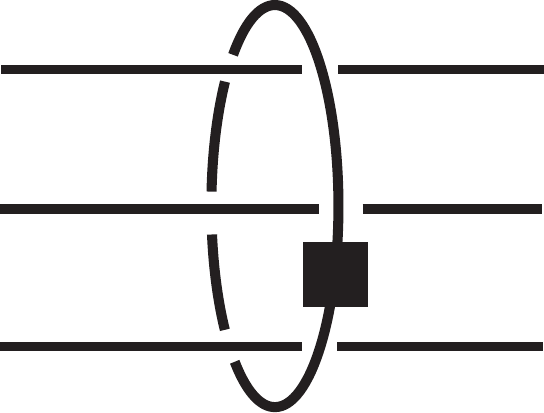}\put(19, 70){$i$}\put(18, 44){$j$}\put(17, 14){$k$}\end{overpic}}\right>=\frac{1}{\theta(i, j, k)} \left<\raisebox{-17 pt}{\begin{overpic}[bb= 0 0 261 191, width=55 pt]{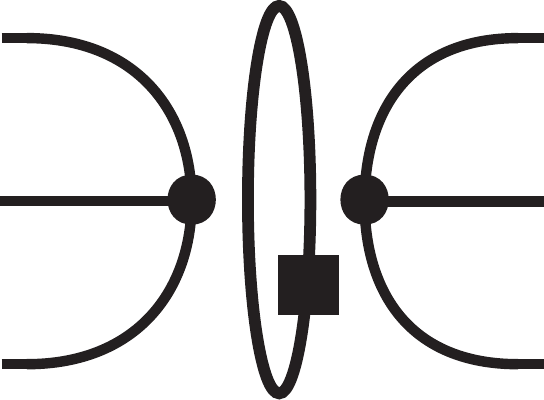}\put(7, 71){$i$}\put(6, 44){$j$}\put(5, 11){$k$}\put(88, 71){$i$}\put(87, 44){$j$}\put(86, 11){$k$}\end{overpic}}\right>.\end{equation}
We prove the case where a simple circle intersects diagrams at two points. We have
\begin{eqnarray*}
& &\left<\raisebox{-10 pt}{\begin{overpic}[bb= 0 0 257 92, width=70 pt]{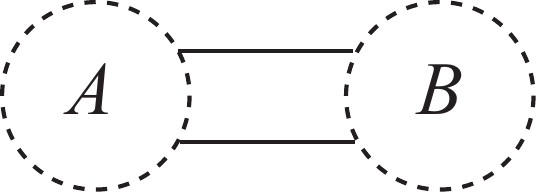}\put(45, 31){$\,_i$}\put(45, 3){$\,_j$}\end{overpic}}\right>\left<\raisebox{-10 pt}{\begin{overpic}[bb= 0 0 257 92, width=70 pt]{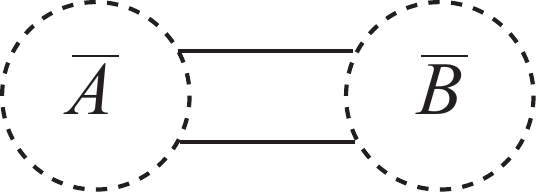}\put(45, 31){$\,_i$}\put(45, 3){$\,_j$}\end{overpic}}\right>\\
& &\hspace{-0.5cm}=\frac{1}{N^2}\left<\raisebox{-10 pt}{\begin{overpic}[bb= 0 0 257 92, width=70 pt]{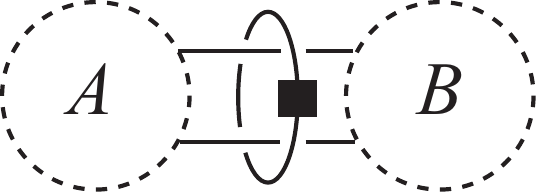}\put(34, 31){$\,_i$}\put(34, 3){$\,_j$}\end{overpic}}\right>\left<\raisebox{-10 pt}{\begin{overpic}[bb= 0 0 257 92, width=70 pt]{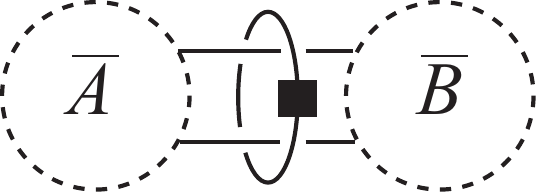}\put(34, 31){$\,_i$}\put(34, 3){$\,_j$}\end{overpic}}\right>\\
& &\hspace{-0.7cm}\overset{(\ref{eq11})}{=}\frac{1}{N^2}\frac{\delta_{ij}}{\Delta_i^2}\left<\raisebox{-10 pt}{\begin{overpic}[bb= 0 0 65 53, width=32 pt]{Lemma03.pdf}\put(77, 70){$\,_i$}\end{overpic}}\,\raisebox{-8 pt}{\includegraphics[bb= 0 0 84 196, width=10 pt]{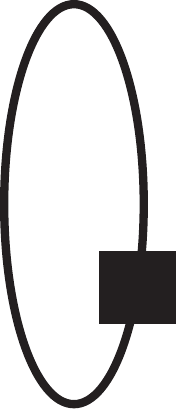}}\,\raisebox{-10 pt}{\begin{overpic}[bb=  0 0 65 53, width=32 pt]{Lemma04.pdf}\put(0, 70){$\,_i$}\end{overpic}}\right>\left<\raisebox{-10 pt}{\begin{overpic}[bb= 0 0 113 92, width=32 pt]{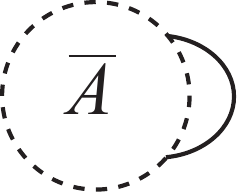}\put(77, 70){$\,_i$}\end{overpic}}\,\raisebox{-8 pt}{\includegraphics[bb= 0 0 84 196, width=10 pt]{Lemma14.pdf}}\,\raisebox{-10 pt}{\begin{overpic}[bb= 0 0 112 92, width=32 pt]{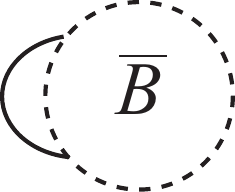}\put(0, 70){$\,_i$}\end{overpic}}\right>\\
& &\hspace{-0.6cm}\,\,=\,\frac{1}{N^2}\frac{\delta_{ij}}{\Delta_i^2}\left<\raisebox{-10 pt}{\begin{overpic}[bb= 0 0 65 53, width=32 pt]{Lemma03.pdf}\put(77, 70){$\,_i$}\end{overpic}}\,\raisebox{-8 pt}{\includegraphics[bb= 0 0 84 196, width=10 pt]{Lemma14.pdf}}\,\raisebox{-10 pt}{\begin{overpic}[bb= 0 0 112 92, width=32 pt]{Lemma04-2.pdf}\put(0, 70){$\,_i$}\end{overpic}}\right>\left<\raisebox{-10 pt}{\begin{overpic}[bb= 0 0 113 92,width=32 pt]{Lemma03-2.pdf}\put(77, 70){$\,_i$}\end{overpic}}\,\raisebox{-8 pt}{\includegraphics[bb= 0 0 84 196, width=10 pt]{Lemma14.pdf}}\,\raisebox{-10 pt}{\begin{overpic}[bb= 0 0 65 53, width=32 pt]{Lemma04.pdf}\put(0, 70){$\,_i$}\end{overpic}}\right>\\
& &\hspace{-0.7cm}\overset{(\ref{eq11})}{=}\frac{1}{N^2}\left<\raisebox{-10 pt}{\begin{overpic}[bb= 0 0 257 92, width=70 pt]{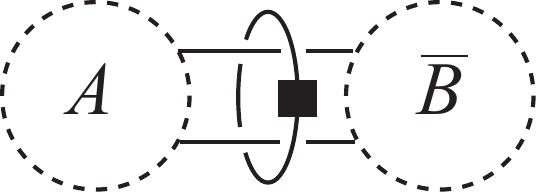}\put(34, 31){$\,_i$}\put(34, 3){$\,_j$}\end{overpic}}\right>\left<\raisebox{-10 pt}{\begin{overpic}[bb= 0 0 257 92, width=70 pt]{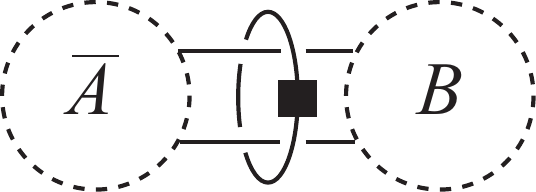}\put(34, 31){$\,_i$}\put(34, 3){$\,_j$}\end{overpic}}\right>\\
& &\hspace{-0.5cm}=\left<\raisebox{-10 pt}{\begin{overpic}[bb= 0 0 257 92, width=70 pt]{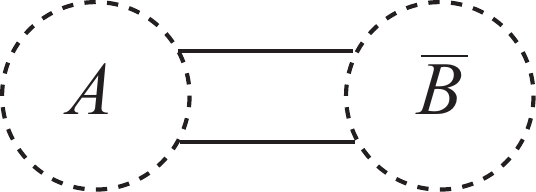}\put(45, 32){$\,_i$}\put(45, 3){$\,_j$}\end{overpic}}\right>\left<\raisebox{-10 pt}{\begin{overpic}[bb= 0 0 257 92,width=70 pt]{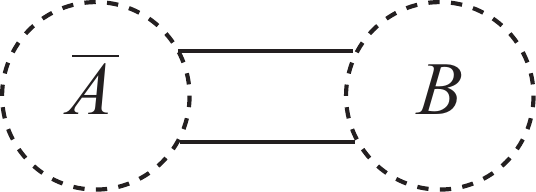}\put(45, 32){$\,_i$}\put(45, 3){$\,_j$}\end{overpic}}\right>,
\end{eqnarray*}
where the third equation holds because of the relation of the Kauffman bracket $\left<D\sqcup D'\right>=\left<D\right>\left<D'\right>$. This equation implies the required equation. The other cases are proved similarly. In the 0 point case, diagrams are disjoint. The 1 point case is by using Proposition \ref{Prop1}. The 3 point case is by using Eq.~(\ref{eq12}).
\end{proof}

\begin{prop} \label{Prop3} 
 Let $J$ be a handlebody-link and $\Gamma$ be a spatial trivalent graph that represents $J$. If $\Gamma$ has a loop edge that does not knot, we have another trivalent graph $\Gamma'$ by adding a full twist to the strands that pass through the loop edge. Let $J'$ be a handlebody-link that $\Gamma'$ represents, then $\left< J \right>_H = \left< J' \right>_H$.
\end{prop}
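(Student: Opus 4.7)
The plan is to convert the color sum on the loop edge $\ell$ into an $\Omega$-insertion and then to absorb the full twist via the handle-slide identity~(\ref{eq13}). Because $\ell$ is unknotted in $S^3$, an initial isotopy of $\Gamma$ (using the Reidemeister moves for handlebody-links guaranteed by Theorem~\ref{Thm2}) places $\ell$ as a planar circle bounding a flat disk $D$, with the vertex $v$ sitting on $\partial D$ and the other edge $e$ at $v$ leaving $v$ transversely to $D$. The strands ``passing through $\ell$'' are then exactly those meeting $D$ transversely, and the full twist producing $\Gamma'$ is applied to this strand bundle just below $D$.

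Next, I fix an admissible coloring of every edge and circle of $\Gamma$ other than $\ell$, and denote the color of $e$ by $j$. The only factor in the defining sum of $\langle J\rangle_H$ that depends on the loop color $i$ is
\[
 \Sigma(j)=\sum_{i}\Delta_i\,\frac{|\langle D(i,j,\dots)\rangle|^2}{\theta(i,i,j)}.
\]
Since the Jones--Wenzl idempotent slides freely along an edge, I push $JW_i$ on $\ell$ to a point far from $v$, so that near $v$ only the local Figure~\ref{fig5} structure with colors $(i,i,j)$ is $i$-dependent. The crucial step is then to verify that the coefficient $\Delta_i/\theta(i,i,j)$ coming from Yokota's normalization is exactly what is needed to apply the bubble/fusion relations~(\ref{eq5}) and~(\ref{eq7}) to detach the trivalent vertex at $v$: after detachment, the edge $e$ passes straight through the site of the old vertex, and $\ell$ becomes a free framed circle carrying a solitary $JW_i$, still encircling the bundle of strands through $D$. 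Summing $\sum_i\Delta_i JW_i$ on this isolated circle then recovers the $\Omega$-element by~(\ref{eq8}), so $\Sigma(j)$ is recognized as the Kauffman bracket of the same diagram with $\ell$ replaced by an $\Omega$-decorated framed circle bounding $D$.

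Once $\ell$ has been promoted to an $\Omega$-decorated framed circle, the handle-slide identity~(\ref{eq13}) applies: band-summing any strand through $D$ with a parallel copy of $\ell$ leaves the Kauffman bracket unchanged. A full twist of the strand bundle through $D$ is a finite composition of such band sums — to twist one strand once around the rest, slide it along a parallel copy of the $\Omega$-decorated $\ell$ running around the twist region — so~(\ref{eq13}) gives $\Sigma(j)$ computed on $\Gamma$ equal to $\Sigma(j)$ computed on $\Gamma'$. Resumming over the remaining colorings yields $\langle J\rangle_H=\langle J'\rangle_H$.

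The principal obstacle is the detachment step: one must check in detail that $\Delta_i/\theta(i,i,j)$ is exactly the coefficient for which the local bubble/fusion relations cleanly separate the edge $e$ from the loop. Once this compatibility is verified, the $\Omega$-insertion via~(\ref{eq8}) and the handle-slide via~(\ref{eq13}) complete the argument with no further subtlety.
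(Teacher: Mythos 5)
The paper disposes of Proposition \ref{Prop3} in one line, as a corollary of Theorem \ref{Thm3}: twisting along the disk bounded by the unknotted loop gives a homeomorphism $E(J)\to E(J')$, hence $B_{E(J)}\cong B_{E(J')}$, and the WRT identification forces equality of the invariants. Your route is genuinely different, but as written it has two gaps. The first is the ``detachment'' step, which you correctly flag as the principal obstacle but which is not a matter of verifying a coefficient --- it is not a valid skein identity. Relations (\ref{eq5}) and (\ref{eq7}) cannot convert the single trivalent vertex $v$ into ``$e$ passing straight through'' plus a free $JW_i$-circle: the edge $e$ has only one endpoint at $v$, and an edge of nonzero color cannot simply terminate. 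Moreover your $\Sigma(j)$ contains $|\langle D\rangle|^2=\langle D\sqcup\overline{D}\rangle$, so whatever the weighted color sum produces is a bracket of a diagram built from \emph{two} mirror copies of the picture. The $\Omega$-circle that genuinely emerges from this kind of resummation (this is exactly the mechanism in the proof of Theorem \ref{Thm3}) is a ``vertical'' circle encircling strands of both $D$ and $\overline{D}$, appears at the cost of an extra factor $N^{-1}$, and fuses $\ell$ with its mirror into a single component; it is not ``the same diagram with $\ell$ replaced by an $\Omega$-decorated circle.''

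The second gap is independent and fatal to the final step: the handle-slide identity (\ref{eq13}) cannot produce a full twist on the strands passing through a $0$-framed $\Omega$-circle. A KII band sum of a strand $s_a$ with a parallel copy of $\ell$ adds the $\ell$-row to the $s_a$-row of the linking matrix, i.e.\ changes the symmetrized matrix by $n_a(e_av^{T}+ve_a^{T})$ with $v$ the vector of linking numbers with $\ell$; a full twist changes it by $vv^{T}$. For three or more strands each meeting the disk once, the off-diagonal equations $n_a+n_b=1$ have no integer solution, so no sequence of slides over $\ell$ realizes the twist. What you are reaching for is the Rolfsen/Fenn--Rourke twist, which requires a $\pm1$-framed circle and a KI blow-down, neither of which is available here. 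A direct skein argument can be salvaged --- in the doubled diagram the $\Omega$-encirclement of relation (\ref{eq10}) projects the encircled bundle onto its total-color-$0$ part, on which a full twist reduces to individual framing changes that $\langle\cdot\rangle_Y$ does not see --- but this requires working with $D\sqcup\overline{D}$ throughout, and at that point the paper's reduction to Theorem \ref{Thm3} is both shorter and cleaner.
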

\begin{proof}This proposition is a corollary of Theorem \ref{Thm3} because $J$ and $J'$ have the homeomorphic exteriors.
\end{proof}
\par
By some changes of diagrams, we see that the pairs $\{ 5_3, 6_2 \}$, $\{ 6_5, 6_6 \}$ and $\{ 6_{14}, 6_{15} \}$ are related by the relation in Proposition \ref{Prop2} and the pairs  $\{ 5_1, 6_4\}$ and $\{5_2, 6_{13} \}$ are related by the relation in Proposition \ref{Prop3}. But the difference of pairs between $\{ 5_1, 6_4 \}$ and $\{5_2, 6_{13} \}$ has not been related by the relations above. 

\section{The Identification of $\left< \, \cdot \, \right>_H$ with the WRT Invariants} \label{sec5}
\par
In this section, we show that our invariants $\left< \, \cdot \, \right>_H$ for handlebody-links coincide with special cases of the WRT invariants \cite{RT} (see also \cite{BGM, Lic}) for closed oriented 3-manifolds. 
\begin{thm} \label{Thm3}
Let $J$ be a handlebody-link and  $E(J)$ be the exterior of $J$ in $S^3$. We take the double $B_{E(J)}$ of the exterior i.e. the space made by gluing boundaries of two copies of $E(J)$ by the natural orientation reversing homeomorphism. Then the following equation holds:
$$\left< J \right>_H = N^{\frac{g-l}{2}+1}Z_{\rm WRT}(B_{E(J)}),$$
where $Z_{\rm WRT}( \, \cdot \,)$ denotes the WRT invariants, $N$ is the value of the Kauffman bracket of the trivial circle with $\Omega$, $g$ is the sum of the genera of the components of $J$ and $l$ is the number of components of $J$.
\end{thm}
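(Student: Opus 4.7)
The plan is to derive the identity from the TQFT structure underlying the WRT invariants, applied to the doubling decomposition $B_{E(J)} = E(J) \cup_{\partial J} \overline{E(J)}$. This gluing expresses the WRT invariant of $B_{E(J)}$ as the squared Hermitian norm of the vector $Z_{\rm WRT}(E(J))$ in the TQFT Hilbert space $V(\partial J)$:
$$
Z_{\rm WRT}(B_{E(J)}) = \|Z_{\rm WRT}(E(J))\|^2.
$$

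I would then expand $Z_{\rm WRT}(E(J))$ in the standard orthogonal basis of $V(\partial J)$ attached to the spine $\Gamma$ of $J$: the basis vectors $v_c = Z_{\rm WRT}(J, \Gamma(c))$ are indexed by admissible colorings $c$ of $\Gamma$. Writing $Z_{\rm WRT}(E(J)) = \sum_c \alpha_c v_c$ and pairing with $v_c$, the identity $E(J) \cup J = S^3$ yields
$$
\langle v_c,\, Z_{\rm WRT}(E(J))\rangle = Z_{\rm WRT}(S^3, \Gamma(c)) = \frac{\langle D(c)\rangle}{\sqrt{N}},
$$
so $\alpha_c = \langle D(c)\rangle/(\sqrt{N}\,\|v_c\|^2)$.

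The main technical task is to compute $\|v_c\|^2 = Z_{\rm WRT}\bigl(J \cup_{\partial J} \overline{J},\ \Gamma(c) \sqcup \Gamma(c)\bigr)$. The doubled handlebody $J \cup_{\partial J} \overline{J}$ decomposes as $\bigsqcup_{i=1}^{l} \#_{g_i}(S^1 \times S^2)$, so I would present each summand as $0$-surgery on a $g_i$-component unlink in $S^3$ and then apply the $\Omega$-killing relation (\ref{eq10}) around each surgery circle together with the two- and three-point fusion relations (\ref{eq11})--(\ref{eq12}) to absorb each pair of parallel edges, respectively vertices, of the two copies of $\Gamma(c)$ into a single copy. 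I expect the result to be
$$
\|v_c\|^2 = \frac{\prod_v \theta(i_a^v, i_b^v, i_c^v)}{\prod_{e} \Delta_{i_e}}\,N^{(g-l)/2}.
$$
As sanity checks, the trivial coloring $c\equiv 0$ reduces this to $\|v_0\|^2 = N^{(g-l)/2} = \prod_i N^{(g_i-1)/2} = Z_{\rm WRT}\bigl(\bigsqcup_i \#_{g_i}(S^1\times S^2)\bigr)$, and for a pure-circle (genus-$1$) component both products are empty so $\|v_j\|^2 = 1$, matching orthonormality in $V(T^2)$.

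Assembling these ingredients,
$$
Z_{\rm WRT}(B_{E(J)}) = \sum_c |\alpha_c|^2\, \|v_c\|^2 = \sum_c \frac{|\langle D(c)\rangle|^2/N}{\|v_c\|^2} = N^{-(g-l)/2 - 1}\, \langle J \rangle_H,
$$
which rearranges to the claimed identity. The hardest step I anticipate is the norm calculation $\|v_c\|^2$: while fusion and $\Omega$-killing should dispose of the parallel strands cleanly and reproduce the factor $\prod_v \theta / \prod_e \Delta_{i_e}$ local to the spine, correctly bookkeeping the global powers of $N$ arising from the surgery normalization on each of the $l$ summands is delicate and will need careful verification.
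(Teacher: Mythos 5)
Your proposal is correct in outline, but it takes a genuinely different route from the paper. The paper never leaves the surgery formula: starting from $\langle D_{\Gamma'}\rangle\langle\overline{D_{\Gamma'}}\rangle=\langle D_{\Gamma'}\sqcup\overline{D_{\Gamma'}}\rangle$, it inserts $l$ trivial $\Omega$-circles at a cost of $N^{-l}$ and uses the fusion and $\Omega$-encirclement relations to rewrite the whole sum defining $\langle J\rangle_H$ as $N^{-(l-1)}\langle\Omega S'_{\Gamma}\rangle$ for an explicit symmetric framed link $S'_{\Gamma}$ with $g+l-1$ components and zero signature; the topological content is then a hands-on cut-and-paste verification that surgery on $S'_{\Gamma}$ yields the double $B_{E(J)}$, and the power $N^{\frac{g-l}{2}+1}$ falls out of comparing $-(l-1)$ with the normalization $N^{-(t+1)/2}$. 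You instead invoke the TQFT package: the doubling formula $Z_{\rm WRT}(B_{E(J)})=\langle Z(E(J)),Z(E(J))\rangle$, the orthogonal basis of $V(\partial J)$ given by the handlebody with colored spine, the coefficients from $J\cup_{\partial}E(J)=S^3$, and the norm $\|v_c\|^2=N^{(g-l)/2}\prod_v\theta/\prod_e\Delta_{i_e}$. Your bookkeeping is right: that norm formula is the standard one (your genus-one and trivially colored sanity checks are correct; note the pairing is merely Hermitian, not positive, since $\theta$ and $\Delta$ can be negative, which is harmless because the expansion only needs orthogonality and $\|v_c\|^2\neq 0$), it reproduces exactly the weights $\prod_e\Delta_{i_e}$ of the definition of $\langle\,\cdot\,\rangle_H$ with no $\Delta$-factor for circle components, and it yields the stated exponent. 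What your route buys is a conceptual explanation: the theorem is the usual ``double equals squared norm'' phenomenon. What it costs is that the paper defines $Z_{\rm WRT}$ only through the Kirby-calculus surgery formula, so the gluing axiom, the existence and orthogonality of the colored-spine basis, and the norm formula are substantial imported inputs that you must cite (Reshetikhin--Turaev; Blanchet--Habegger--Masbaum--Vogel; Lickorish); proving them inside the paper's framework would amount to essentially the same $\Omega$-circle fusion and surgery-presentation-of-the-double arguments the paper performs directly, including the $N$-power bookkeeping you correctly identify as the delicate step.
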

\par
In the rest of this section, we prove this theorem.
\subsection{The WRT invariants}
\par
We review a definition of the WRT invariants for closed oriented 3-manifolds. The WRT invariants can be calculated via \textit{Kirby diagrams}. Let $M$ be a closed oriented 3-manifold. A Kirby diagram $L_M$ of $M$ is a diagram of a framed link in $S^3$ on which the surgery makes $M$. In this paper, we use the blackboard framing to express framings of Kirby diagrams. It is well known that two Kirby diagrams representing the same closed manifold are transformed from one to another by a sequence of four moves RII, RIII, KI and KII. The moves RII and RIII are the second and third Reidemeister moves in Sec. \ref{sec2}. The KI move is adding a disjoint $\pm 1$ framed trivial component to diagrams or its inverse move i.e. removing a $\pm 1$ framed trivial component that is disjoint from the other components. The KII move is a band-sum of one component of a diagram and a parallel copy of another component of the diagram.

\begin{defn}[WRT invariants]
 Let $M$ be a closed oriented 3-manifold and $L_M$ be its Kirby diagram. Fix an integer $r \geq 3$ and we use the notations of Sec. \ref{sec4}. Let $\Omega L_M$ be the diagram derived by inserting $\Omega$ in each component of $L_M$ along its framing. The WRT invariants $Z_{\rm WRT}( \, \cdot \,)$ are defined by the following equation:
$$Z_{\rm WRT}(M) := N^{-\frac{t+1}{2}}\kappa^{-\sigma(L_M)}\left< \Omega L_M \right>,$$
where $t$ is the number of the components of $L_M$, $N=\left< \Omega \bigcirc \right>$, $\kappa = \exp (i\pi(r-2)(3-2r)/4r)$ and $\sigma(L_M)$ is the signature of the linking matrix of $L_M$ which is the symmetric  matrix whose $(i,j)$ entry $(i\neq j)$ is the linking number of the $i$th and $j$th components and the $(i,i)$ entry is the framing of the $i$th component. This value is invariant under the four moves RII, RIII, KI and KII.
\end{defn} 

\subsection{Proof of Theorem \ref{Thm3}}
\par
Let $J$ be a handlebody-link and $\Gamma$ be a spatial graph which represents $J$ such that each component of $\Gamma$ is a bouquet graph (i.e. a graph with just one vertex) or a circle. By adding new edges as Eq.~(\ref{eq1}), we expand the vertices of $\Gamma$ and have a spatial trivalent graph $\Gamma'$. Using Eqs.~(\ref{eq7}) and (\ref{eq10}), we have the following relation for the Kauffman bracket.
\[
\left<\,\,\,\,\, \raisebox{-17 pt}{\begin{overpic}[bb= 0 0 149 121, width=50 pt]{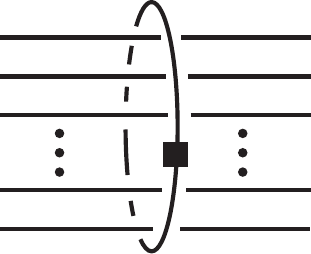}\put(-17, 70){$\,_{i_1}$}\put(-17, 57){$\,_{i_2}$}\put(-11, 25){$\vdots$}\put(-17, 7){$\,_{i_k}$}\end{overpic}}\, \right>=
\hspace{-0.3cm}\sum_{\substack{ \\x_1, x_2, \dots, x_l}} \hspace{-0.3cm} \frac{\Delta_{x_1}\Delta_{x_2}\cdots\Delta_{x_l}}{\theta(i_1, i_2, x_1) \cdots \theta(x_l, i_{k-1}, i_k)}
\left< \,\,\,\,\, \raisebox{-20 pt}{\begin{overpic}[bb= 0 0 66 107, width=28 pt]{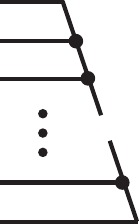}\put(-18, 100){$\,_{i_1}$}\put(-18, 80){$\,_{i_2}$}\put(-13, 32){$\vdots$}\put(-18, 2){$\,_{i_k}$}\put(40, 76){$\,_{x_1}$}\put(45, 58){$\,_{x_2}$}\put(54, 30){$\,_{x_l}$}\end{overpic}}\hspace{0.2cm}\raisebox{-11 pt}{\includegraphics[bb= 0 0 84 196, width=12 pt]{Lemma14.pdf}}\hspace{0.2cm}\raisebox{-20 pt}{\begin{overpic}[bb= 0 0 66 107, width=28 pt]{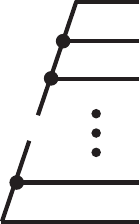}\put(62, 100){$\,_{i_1}$}\put(62, 80){$\,_{i_2}$}\put(66, 32){$\vdots$}\put(62, 2){$\,_{i_k}$}\put(2, 76){$\,_{x_1}$}\put(-5, 58){$\,_{x_2}$}\put(-13, 30){$\,_{x_l}$}\end{overpic}} \,\,\,\,\, \right>,
\]
where $4\leq k$ and $l=k-3$. Let $D_{\Gamma'}$ be a diagram of $\Gamma'$. The product of the Kauffman brackets of $D_{\Gamma'}$ and $\overline{D_{\Gamma'}}$ with some colors in the definition of $\left< J \right>_H$ is equal to the Kauffman bracket of the disjoint union of $D_{\Gamma'}$ and $\overline{D_{\Gamma'}}$: 
$$\left<D_{\Gamma'} (i_1, \dots, i_n)\right> \left<\overline{D_{\Gamma'}} (i_1, \dots, i_n)\right>= \left<D_{\Gamma'} (i_1, \dots, i_n)\sqcup\overline{D_{\Gamma'}} (i_1, \dots, i_n)\right>.$$
We add new $l$ trivial $\Omega$ circles to the diagram at a cost of $N^{-l}$ where $l$ is the number of components of $J$. Using the above relation with the $\Omega$ circles at expanded vertices of $\left<D_{\Gamma'} (i_1, \dots, i_n)\sqcup\overline{D_{\Gamma'}} (i_1, \dots, i_n)\right>$ and Eq.~(\ref{eq11}) to circle components, it changes to the Kauffman bracket of a diagram of the symmetric framed link $\left<S_{\Gamma} (j_1, \dots, j_m)\right>$ with remaining colors $(j_1, \dots, j_m)$ and $\Omega$ in ``vertical'' circles. $S_{\Gamma}$ consists of only circles (Fig.~\ref{fig7}). Then
$$\left< J \right>_H = N^{-l} \sum_{j_1, \dots , j_m} \prod_{k=1}^{m}\Delta_{j_k}\left<S_{\Gamma} (j_1, \dots, j_m)\right> = N^{-l} \left<\Omega S_{\Gamma}\right>,$$
where the second equation comes from the definition of $\Omega$. Using Eq.~(\ref{eq13}), one of the ``vertical" $\Omega$ circles can be taken away by sliding over other ``vertical" $\Omega$ circles. Therefore, $N^{-l} \left<\Omega S_{\Gamma}\right>= N^{-(l-1)}\left<\Omega S'_{\Gamma}\right>,$ where $S_{\Gamma}$ changed to $S'_{\Gamma}$ by taking away a ``vertical" circle (Fig.~\ref{fig7}). Since the diagram $S'_{\Gamma}$ is symmetric, the signature $\sigma(S'_{\Gamma})$ of the linking matrix of $S'_{\Gamma}$ is 0. The number $m$ is equal to the sum $g$ of the genera of the components of $J$. Then the number of the components of $S'_{\Gamma}$ is equal to $g+l-1$. We assume that $S'_{\Gamma}$ is a Kirby diagram for $B_{E(J)}$. Then the difference of the exponents of $N$ between $\left< J \right>_H$ and $Z_{\rm WRT}(B_{E(J)})$ is $-(l-1)-\left(-\frac{g+l-1+1}{2}\right) = \frac{g-l}{2}+1$.
\begin{figure}[ht]
 $$\raisebox{-30pt}{\includegraphics[bb= 0 0 196 147,height=65 pt]{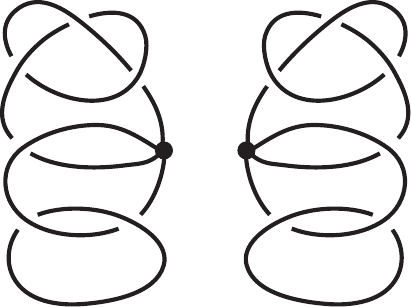}} \longrightarrow \raisebox{-30pt}{\includegraphics[bb= 0 0 196 147, height=65 pt]{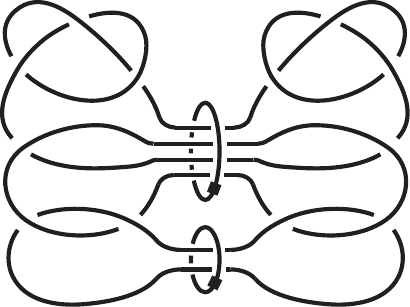}} \longrightarrow \raisebox{-30pt}{\includegraphics[bb= 0 0 196 147, height=65 pt]{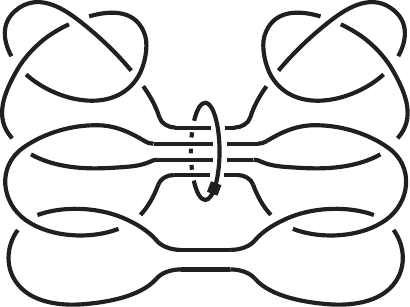}} $$
 \caption{$\Gamma$, $S_{\Gamma}$ and $S'_{\Gamma}$.} \label{fig7}
 \end{figure}
\par
We finally check that the diagram $S'_{\Gamma}$ is a Kirby diagram for the double space $B_{E(J)}$ of the exterior of  the handlebody-link $J$ in $S^3$. First, we check the case where $J$ is a handlebody-knot. We consider the exterior $E(J) (= E(\Gamma))$ as a 3-ball $B^3$ which is the complement in $S^3$ of the neighborhood of the vertex of $\Gamma$ with tunnels along the edges of $\Gamma$. The exterior $E(\overline{J})(=E(\overline{\Gamma}))$ of the mirror image of $J$ is also regarded in the same way. $S^3(=\mathbb{R}^3\cup\{\infty\})$ is divided into two 3-balls: $S^3= \{x\leq0\} \cup \{0\leq x\}$, where we use a coordinate of $\mathbb{R}^3$. We identify the 3-ball of $E(J)$ with $\{x\leq0\}$ and the 3-ball of $E(\overline{J})$ with $\{0\leq x\}$ and then glue the surfaces of the 3-balls naturally so that the ends of the corresponding tunnels of $E(J)$ and $E(\overline{J})$ are identified and the result is the exterior space $E(S'_{\Gamma})$ of the framed link that $S'_{\Gamma}$ represents (Fig.~\ref{fig8}). 
\begin{figure}[ht]
 $$\raisebox{-29pt}{\includegraphics[bb= 0 0 83 113, height=65 pt]{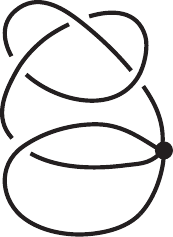}} \longrightarrow \raisebox{-40pt}{\includegraphics[bb= 0 0 127 191, height=90 pt]{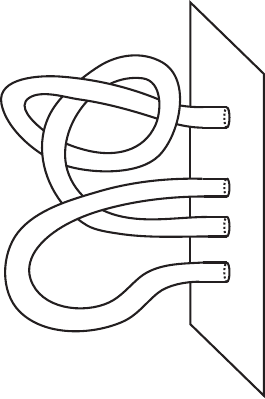}} \longrightarrow \raisebox{-40pt}{\includegraphics[bb= 0 0 218 191, height=90 pt]{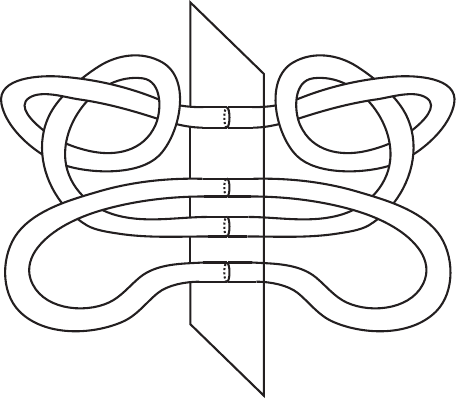}}$$
 \caption{$\Gamma$, $E(\Gamma)$ and $E(S'_{\Gamma})$.} \label{fig8}
 \end{figure}
In this case, $S'_{\Gamma}$ has no ``vertical'' circle. The space made from $E(S'_{\Gamma})$ by gluing corresponding points on the tunnel surfaces of $E(J)$ and $E(\overline{J})$ is the double space $B_{E(J)}$ of the exterior of $J$. The surgery on the framed link that $S'_{\Gamma}$ represents is gluing $m$ $D^2\times S^1$'s to the surfaces of $E(S'_{\Gamma})$ so that the boundary of each disc $D^2$ of $D^2\times S^1$ coincides with a longitude line of the surface along the framing. In this case, the framing of each component is 0 and gluing the discs to the surfaces of $E(S'_{\Gamma})$ is equal to gluing the corresponding surface points of the tunnels of $E(J)$ and $E(\overline{J})$. Therefore, the result of the surgery is the double space $B_{E(J)}$. 
\par
Next, we check the case where $J$ is a handlebody-link whose number of components is more than 1. We see the surgery on a ``vertical'' circle. Assume that the ``vertical'' circle is in the plane $\{x=0\}$ in $S^3$. The surgery on the circle is removing the neighborhood of the circle from $S^3$ and refilling the hole with $D^2\times S^1$ so that the boundary of each disc $D^2$ coincides with the 0 framing longitude of the hole. We divide $S^3$ and $D^2\times S^1$ into two parts: $S^3= \{x\leq0\} \cup \{0\leq x\}$ and $D^2\times S^1 = D^2 \times I \cup D^2 \times I$, where $I$ is the unit interval. We can see the surgery as the operation first gluing the two $D^2 \times I$'s to $\{x\leq0\}$ and $\{0\leq x\}$ along annuli where the ``vertical'' circle was removed and then gluing $\{x\leq0\} \cup D^2 \times I$ and $D^2 \times I \cup \{0\leq x\}$ as in Fig.~\ref{fig9}, where in the fourth figure an inside surface is glued to another inside surface and an outside surface is glued to another outside surface.
\begin{figure}[ht]
$$\raisebox{-37pt}{\includegraphics[bb= 0 0 35 189, height=80 pt]{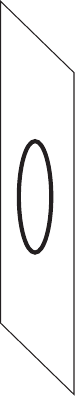}}\longrightarrow \raisebox{-37pt}{\includegraphics[bb= 0 0 35 189, height=80 pt]{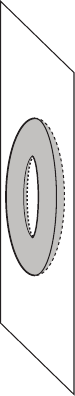}}\longrightarrow \raisebox{-37pt}{\includegraphics[bb= 0 0 35 189, height=80 pt]{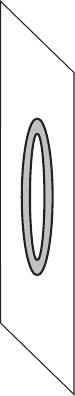}} \,\, \raisebox{-13pt}{\includegraphics[bb= 0 0 46 76, height=31 pt]{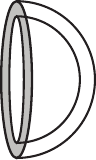}} \,\, \raisebox{-13pt}{\includegraphics[bb= 0 0 46 76, height=31 pt]{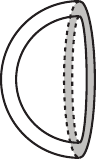}} \,\, \raisebox{-37pt}{\includegraphics[bb=  0 0 35 189, height=80 pt]{surgery02-5.pdf}}\longrightarrow \raisebox{-37pt}{\includegraphics[bb=  0 0 35 189, height=80 pt]{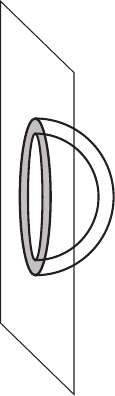}} \,\,\,\,\,\,\,\mbox{\Large{$\cup$}} \,\, \raisebox{-37pt}{\includegraphics[bb= 0 0 56 189, height=80 pt]{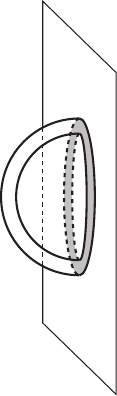}}$$
\caption{The surgery on a ``vertical'' circle.} \label{fig9}
\end{figure}
Then we consider the surgery on $S'_{\Gamma}$. We place the framed link that $S'_{\Gamma}$ represents in a symmetric position with respect to the plane $\{x=0\}$ in $S^3$ such that the ``vertical'' circles are in the plane $\{x=0\}$. Let $E(S'_{\Gamma})_{-}$ be the intersection $E(S'_{\Gamma}) \cap \{x\leq0\}$ and $E(S'_{\Gamma})_{+}$ be $E(S'_{\Gamma})\cap\{0\leq x\}$. We do the surgery by attaching some $D^2 \times I$'s to $E(S'_{\Gamma})_{-}$ and $E(S'_{\Gamma})_{+}$ along annuli where the ``vertical'' circles were removed and then gluing the corresponding surfaces of $E(S'_{\Gamma})_{-}\cup \{D^2 \times I$'s$\}$ and $E(S'_{\Gamma})_{+}\cup \{D^2 \times I$'s$\}$ (Fig.~\ref{fig10}). The spaces $E(S'_{\Gamma})_{-} \cup \{D^2 \times I$'s$\}$ and $E(S'_{\Gamma})_{+} \cup \{D^2 \times I$'s$\}$ are equal to $E(J)$ and $E(\overline{J})$ respectively (Fig.~\ref{fig10} (right)). Therefore, the result of the surgery on $S'_{\Gamma}$ is the double space $B_{E(J)}$ of the exterior of the handlebody-link $J$. This completes the proof of Theorem \ref{Thm3}.

\begin{figure}[ht]
$$\raisebox{-33pt}{\includegraphics[bb= 0 0 196 147, height=70 pt]{graph18.pdf}}\,\, \longrightarrow \,\, \raisebox{-45pt}{\includegraphics[bb= 0 0 173 240, height=100 pt]{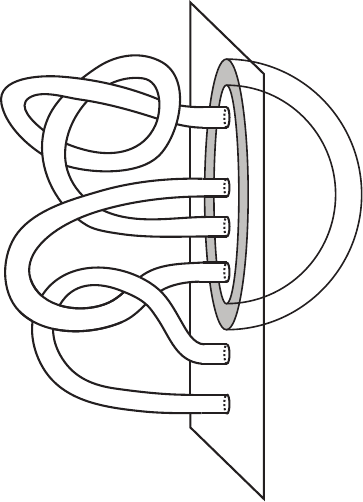}} \,\,\, \mbox{\Large{$\cup$}} \,\,\, \raisebox{-45pt}{\includegraphics[bb= 0 0 173 240, height=100 pt]{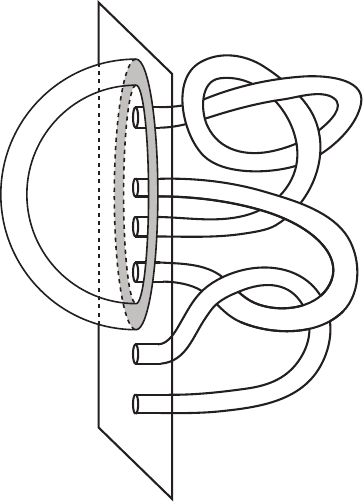}}$$
\caption{The surgery on $S'_{\Gamma}$.} \label{fig10}
\end{figure}

\section*{Acknowledgments}
The authors would like to thank Atsushi Ishii for his helpful comments. The second author was partly supported by JSPS KAKENHI Grant Number 22540236, 23340115.

\end{document}